\newtheorem{theorem}{Theorem}
\newtheorem{lemma}{Lemma}
\newtheorem{corollary}{Corollary}
\newtheorem{definition}{Definition}
\newtheorem{assumption}{Assumption}
\newtheorem{proposition}{Proposition}
\newtheorem{remark}{Remark}
\newcommand*{\E}{\mathbb{E}}
\newcommand*{\lelabeled}[1]{\overset{\text{#1}}{\le}}
\newcommand*{\eqlabeled}[1]{\overset{\text{#1}}{=}}
\newcommand*{\gelabeled}[1]{\overset{\text{#1}}{\ge}}
\newcommand*{\leeqref}[1]{\overset{\eqref{#1}}{\le}}
\newcommand*{\geeqref}[1]{\overset{\eqref{#1}}{\ge}}
\newcommand*{\eqeqref}[1]{\overset{\eqref{#1}}{=}}
\newcommand*{\trace}[1]{\mathrm{tr}\left({#1}\right)}
\providecommand{\keywords}[1]{\textbf{Keywords:} #1}
\begin{document}

\title{Zeroth-order Random Subspace Algorithm for Non-smooth Convex Optimization}
\author[1]{Ryota Nozawa \footnote{nozawa-ryota860@g.ecc.u-tokyo.ac.jp}}
\author[2]{Pierre-Louis Poirion \footnote{pierre-louis.poirion@riken.jp}}
\author[1,2]{Akiko Takeda \footnote{takeda@mist.i.u-tokyo.ac.jp}}

\affil[1]{Department of Mathematical Informatics, The University of Tokyo, Tokyo, Japan}
\affil[2]{Center for Advanced Intelligence Project, RIKEN, Tokyo, Japan}

\maketitle

\begin{abstract}
   Zeroth-order optimization, which does not use derivative information, is one of the significant research areas in the field of mathematical optimization and machine learning. 
   Although various studies have explored zeroth-order algorithms, one of the theoretical limitations is that oracle complexity depends on the dimension,
   i.e., on the number of variables, of the optimization problem.
   In this paper, to reduce the dependency of the dimension in oracle complexity, we propose a zeroth-order random subspace algorithm by combining a gradient-free algorithm  (specifically, Gaussian randomized smoothing with central differences)
   with random projection.
   We derive the worst-case oracle complexity of our proposed method in non-smooth and convex settings; {\color{black} it is
   equivalent to standard results for full-dimensional non-smooth convex algorithms. 
   Furthermore,} we prove that ours also has a local convergence rate independent of the original dimension under additional assumptions.
   In addition to the theoretical results, numerical experiments show that when an objective function has a specific structure, the proposed method can become experimentally more efficient due to random projection.
\end{abstract}

\keywords{zeroth-order optimization, random projection, convex optimization, oracle complexity}

\section{Introduction}
We consider the following unconstrained optimization problem:
\begin{equation}\label{main problem}
	\min_{x \in \mathbb{R}^n} f(x),
\end{equation}
where the objective function, $f$, is possibly non-smooth, but it is convex. Throughout this paper, we assume that $f$ is $L$-Lipschitz continuous, i.e.,
\[
  |f(x) - f(y)| \leq L \|x-y\|_2
\] 
holds for all $x$ and $y$.
Furthermore, we assume that \eqref{main problem} is zeroth-order optimization problem, which means that only the function values $f(x)$ are accessible and the derivatives of $f$ are unavailable, or that the calculation of $\nabla f$ is expensive.
Zeroth-order optimization has many applications such as bandit~\cite{bubeck2017kernel}, adversarial attack~\cite{ilyas2018black}, or hyperparameter tuning~\cite{bergstra2011algorithms}.
Additionally, there are various types of optimization methods~\cite{larson2019derivative} for zeroth-order optimization such as random search and model based methods, and these methods have been widely studied in both a smooth setting~\cite{ye2018hessian,berglund2022zeroth,kozak2023zeroth,yue2023zeroth} and a non-smooth setting~\cite{gasnikov2016gradient,rando2023optimal,duchi2015optimal,gasnikov2022power,beznosikov2020derivative,shamir2017optimal}.

However, one of the theoretical limitations of zeroth-order algorithms is that the oracle complexity depends on the dimension $n$.
For example, Duchi et al.~\cite{duchi2015optimal} and Gasnikov et al.~\cite{gasnikov2016gradient} proposed variants of mirror descent algorithms. 
Duchi et al.~\cite{duchi2015optimal} 
prove under the zeroth-order setting that a lower bound on the oracle complexity required for their method to find an $\varepsilon$-approximate solution is $O(\frac{n}{\varepsilon^2})$ for the case of linear losses.
Gasnikov et al.~\cite{gasnikov2016gradient} analyze their method on the unit simplex $S_n = \{x|x_i \ge 0, \sum_{i=1}^n x_i =1\}$ and prove that the oracle complexity is $O(\frac{n}{\varepsilon^2})$. 
On the other hand, Nesterov and Spokoiny~\cite{nesterov2017random} proposed a method using Gaussian smoothing, which is defined by
\[
  f_{\mu}(x) := \E_{u} \left[f(x+\mu u)\right].
\]
By approximating the gradient of the smoothed function $f_{\mu}$ by {\color{black}  the forward difference: $\frac{f(x+\mu u) - f(x)}{\mu} u$ or the central difference: $\frac{f(x+\mu u) - f(x-\mu u)}{2\mu} u$}, they obtain the oracle complexity under different settings; concretely, they obtain a complexity of order  $O\left(\frac{n^2}{\varepsilon^2}\right)$ when the objective function is non-smooth and convex.
Later, some papers using random smoothing~\cite{shamir2017optimal,gasnikov2022power,rando2023optimal} are able to improve oracle complexity with $O(\frac{n}{\varepsilon^2})$ using the central difference.
An oracle complexity with $O(\frac{n}{\varepsilon^2})$ is a natural result in zeroth-order optimization, since standard subgradient methods for non-smooth convex functions require $O(\frac{1}{\varepsilon^2})$ iterations and the approximation of the gradients using the finite difference needs $O(n)$ times function evaluations.

To overcome this dependence on the dimension $n$, several research~\cite{yue2023zeroth,golovin2019gradientless,qian2016derivative} assume that $f$ has a low-dimensional structure. 
For example, Yue et al.~\cite{yue2023zeroth} use a notion of effective dimensionality and prove that the oracle complexity depends on the effective dimension $\mathrm{ED}_{\alpha}:= \sup_{x}\sum_{i=1}^n \sigma_i(\nabla^2 f(x))^{\alpha}$, where $\sigma_i$ denotes a singular value, rather than on $n$.
When the objective function is convex with $L_1$-Lipschitz gradient and $H$-Lipschitz Hessian,
their algorithm is shown to have an oracle complexity where  $n$ in the oracle complexity of \cite{nesterov2017random} has changed to $\mathrm{ED}_{1/2}$.
In practice, the effective dimension is often significantly smaller than the dimension $n$.
In such a case, the oracle complexity is improved under convex and Lipschitz gradient settings.

An alternative approach to reduce the dependency on the dimension in the complexity is to use random projections~\cite{cartis2023scalable,kozak2023zeroth,berglund2022zeroth,roberts2023direct}.
Cartis and Roberts~\cite{cartis2023scalable} combine random projections with a model based derivative-free method, which approximates the objective function by interpolation.
In their approach, they solve $f(x_k+ P_ku)$ using a smaller-sized variable $u \in \mathbb{R}^d$ in each iteration, constructed with a random matrix $P_k \in \mathbb{R}^{n \times d}$ and $x_k$, instead of the original function $f(x)$.
Using random projection theory, they prove that when the objective is smooth and non-convex, the methods reduce the dimensionality of oracle complexity from $O(\frac{n^3}{\varepsilon^2})$ to $O(\frac{n^2}{\varepsilon^2})$, in order to find an $\varepsilon$-stationary point.
Kozak et al.~\cite{kozak2023zeroth} consider a zeroth-order variant of \cite{kozak2021stochastic}, approximating the exact gradient by the random projected gradient. 
They obtain the iteration complexity under various parameter choices and assumptions in the smooth setting. 
In the subsequent work, Rando et al.~\cite{rando2023optimal} propose a variant of \cite{kozak2023zeroth} and obtain an oracle complexity of order $O(\frac{n}{\varepsilon^2})$ in the non-smooth setting.
Berglund et al.~\cite{berglund2022zeroth} propose a zeroth-order variant of the randomized subspace Newton method~\cite{gower2019rsn} and prove iteration complexity in the strongly convex case.
Roberts and Royer~\cite{roberts2023direct} propose a subspace variant of the direct search methods~\cite{gratton2015direct,kolda2003optimization}.
They obtain some convergence guarantees under a wide range of subspace projections and directions, and show that their randomized methods give better results than the original full-dimensional ones.
However, to the best of our knowledge, there is no research which reduces the dependency on the dimension $n$ in oracle complexity for non-smooth functions. 

{\color{black} \subsection{Main Contribution}}
In this paper, we aim to reduce the dependency on the dimension in the worst-case oracle complexity by employing random projections, specifically under a non-smooth and convex setting.
We propose an algorithm which combines {\color{black}Gaussian smoothing using central differences}~\cite{nesterov2017random} 
and random projections. 
Our idea is to apply the Gaussian smoothing to the objective function restricted to a subspace, i.e., ${\color{black} h^{(k)}}(u):=f(x_k + P_ku)$, instead of the original function $f(x)$.
We prove that our algorithm achieves an oracle complexity of $O(\frac{n}{\varepsilon^2})$ globally, which is the standard result under the non-smooth and convex setting.
Moreover, under additional local  assumptions on  $f$, we prove an oracle complexity of $O(\frac{d^2}{\varepsilon^2})$ locally, where $d$ is the dimension of the random subspace, defined by $P_k$.
This indicates that by choosing $d$ much smaller than $\sqrt{n}$, our proposed method improves the local oracle complexity.

{\color{black}
We can summarize our contribution as follows.
\begin{itemize}
\item We propose a zeroth-order random subspace algorithm by using random projection technique
  to a Gaussian smoothing algorithm for non-smooth convex optimization problems.
\item 
  Our algorithm achieves an oracle complexity of $O(\frac{n}{\varepsilon^2})$ globally, and also has a local convergence rate independent of the original dimension under additional assumptions.
\item Our numerical experiments show that 
  the proposed method performs well due to random projection for an objective function with a specific structure.
\end{itemize}
}

{
\color{black} \subsection{Related Works on $L_2$ randomized smoothing}
Recently, $L_2$ randomized smoothing has been actively studied for smoothing non-smooth function $f$~\cite{duchi2015optimal,gasnikov2022power,shamir2017optimal}.
The random variable $u$ that defines $f_\mu$ is assumed to be a random vector uniformly distributed on a ball with center $0$ and radius $\mu$, instead of a normally distributed random Gaussian vector.
For the $L_2$ randomized smoothing, Shamir~\cite{shamir2017optimal} proposed the algorithm for bandit convex optimization and showed the optimal rate for convex Lipschitz functions using central differences.
Gasnikov et al.~\cite{gasnikov2022power} proposed the generic approach that combines smoothing and first-order methods. They show that the approach achieves the optimal rate of zeroth-order methods and can utilize various techniques in first-order methods for non-smooth zeroth-order optimization.

In fact, our proposed method can be modified so as to use a $L_2$ randomized smoothing instead of a Gaussian one, and theoretical guarantees are essentially the same for both methods (see Remark~\ref{remark:l2 randomized}).
In any case, since we use properties of Gaussian random matrices to reduce the dimension of the problem,
 we use the Gaussian smoothing in this paper for the simplicity of our discussion.
}

{\color{black} \subsection{Organization}}

In Section~\ref{seciton:Preliminaries}, we introduce some properties of the smoothing function and random matrices and vectors for our analysis.
In Section~\ref{seciton:algorithm}, we present our algorithm, and in Section~\ref{section:global convergence} we prove global convergence in $O(\frac{n}{\varepsilon^2})$. In Section~\ref{section:local convergence}, we prove local convergence in $O(\frac{d^2}{\varepsilon^2})$.
In Section~\ref{seciton:experiments},
we show numerical results and demonstrate that when the objective has a structure suitable for random projections, our method converges faster than existing methods, by reducing the function evaluation time.

\section{Preliminaries}
\label{seciton:Preliminaries}
\subsection{Notation}
  $x^*$ denotes one of the optimal solutions of \eqref{main problem}. 
  Let  $\E_{X}$ denote the expectation of a random variable $X$,
  and $\mathcal{N}(u,\Sigma)$ denote the normal distribution with mean $u$ and covariance $\Sigma$.
  $I_d$ denotes the identity matrix of size $d$.

  We use $\|\cdot\|$ as the Euclidean norm and $\|\cdot\|_{\psi_2}$ as the sub-Gaussian norm of a sub-Gaussian random variable, which is defined by
  \[
    \|X\|_{\psi_2} = \inf\{s > 0 | \E_{X}[\exp{(X^2/s^2)}] \le 2\}.
  \] 
  From the property of the sub-Gaussian norm, $\|X\|_{\psi_2} \le C$ is equivalent to
  \begin{equation}
    \label{eq:property of subgaussian}
    \mathrm{Prob}(|X| \ge t) \le 2\exp{(-ct^2/C)},
  \end{equation}
  where $c$ is an absolute constant.
  Let $\lambda_i(A)$ denote the $i$-th largest eigenvalue of a matrix $A$ and let $\mathbf{1}_{\mathcal{X}}(x)$ denote the indicator function defined by
  \[
    \mathbf{1}_{\mathcal{X}}(x) = \left\{
      \begin{array}{cc}
        1 & (x\in \mathcal{X}),\\
        0 & (\mathrm{otherwise}).
      \end{array}
    \right. 
  \]
  In particular, when $\mathcal{X} = \{x|\langle x,u\rangle \ge 0\}$ or $\mathcal{X} = \{x|\langle x,u\rangle < 0\}$ for some $u$, we use $\mathbf{1}_u^+(x)$ or $\mathbf{1}_u^-(x)$, respectively.
  $\partial f(x)$ denotes sub-differential at $x$.

\subsection{Gaussian Smoothing Function}
In this subsection, we introduce the definition of Gaussian smoothing function and recall its properties.
\begin{definition}~(e.g., \cite{nesterov2017random})
  {\label{def:smoothing function}}
  The Gaussian smoothing of $f:\mathbb{R}^n \rightarrow \mathbb{R}$ is defined by
  \begin{equation} \label{eq:smoothing}   
    f_{\mu}(x):= \E_{u}[f(x + \mu u)],
  \end{equation}
  where $u\sim \mathcal{N}(0,I_n)$ and $\mu$ is some positive constant. 
\end{definition}
It is well-known that if $f$ is convex, then $f_{\mu}$ is also convex.
As derived from Definition~\ref{def:smoothing function}, the gradient $\nabla f_{\mu}$ can be calculated by the following:
\begin{eqnarray}
  \label{eq:gradient of smoothing function}
  \E_{u} \left[\frac{f(x + \mu u)}{\mu} u \right] 
  = && \E_{u} \left[\frac{f(x + \mu u) - f(x)}{\mu} u \right]  \notag \\
  = && \E_{u} \left[\frac{f(x + \mu u) - f(x - \mu u)}{2\mu} u \right] 
  = \nabla f_{\mu}(x).
\end{eqnarray}
We can evaluate the error bound between $f$ and $f_\mu$ when $f$ is convex and Lipschitz continuous.
\begin{lemma}{\label{lemma:relations f and f_mu}}~\cite{nesterov2017random}
  If a function $f:\mathbb{R}^n \rightarrow \mathbb{R}$ is convex and $L$-Lipschitz continuous,
  then
  \begin{equation}
    \label{eq:error smoothing function}  
    f(x) \le f_{\mu}(x) \le f(x) + \mu L \sqrt{n}
  \end{equation}
  holds for any positive $\mu$.
\end{lemma}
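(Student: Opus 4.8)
The plan is to establish the two inequalities separately, each following from a single application of Jensen's inequality, exploiting convexity for the lower bound and Lipschitz continuity for the upper bound.

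First I would prove the lower bound $f(x) \le f_{\mu}(x)$. Since $u \sim \mathcal{N}(0, I_n)$ has $\E_u[u] = 0$, the argument $x + \mu u$ has mean $x$. Because $f$ is convex, Jensen's inequality gives
\[
  f_{\mu}(x) = \E_u\left[f(x + \mu u)\right] \ge f\left(\E_u[x + \mu u]\right) = f(x + \mu\, \E_u[u]) = f(x),
\]
which is exactly the left inequality in \eqref{eq:error smoothing function}.

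Next I would prove the upper bound $f_{\mu}(x) \le f(x) + \mu L \sqrt{n}$. Here I would use the $L$-Lipschitz continuity of $f$ inside the expectation: for each realization of $u$ we have $f(x + \mu u) \le f(x) + L\|\mu u\| = f(x) + \mu L \|u\|$. Taking expectations,
\[
  f_{\mu}(x) \le f(x) + \mu L\, \E_u\left[\|u\|\right].
\]
It then remains to bound $\E_u[\|u\|]$ by $\sqrt{n}$. By concavity of the square root and Jensen's inequality, $\E_u[\|u\|] \le \sqrt{\E_u[\|u\|^2]}$, and since $\E_u[\|u\|^2] = \sum_{i=1}^n \E[u_i^2] = n$ for standard Gaussian coordinates, we obtain $\E_u[\|u\|] \le \sqrt{n}$, completing the bound.

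Neither step presents a serious obstacle; the proof is essentially two invocations of Jensen's inequality in opposite directions. The only point requiring a small amount of care is the final moment estimate $\E_u[\|u\|] \le \sqrt{n}$: rather than computing the exact mean of a chi distribution, it is cleanest to bound it from above via Jensen and the elementary identity $\E_u[\|u\|^2] = n$, which keeps the argument self-contained and yields precisely the constant $\sqrt{n}$ appearing in the statement.
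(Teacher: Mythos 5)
Your proof is correct and is essentially the standard argument from the cited reference \cite{nesterov2017random} (the paper itself states this lemma without proof): Jensen's inequality with convexity for the lower bound, and Lipschitz continuity plus the first-moment bound $\E_u[\|u\|]\le\sqrt{\E_u[\|u\|^2]}=\sqrt{n}$ for the upper bound, the latter being exactly the $p=1$ case of Lemma~\ref{lemma:properties of random vectors}.\ref{lemma:norm gaussian}. No gaps.
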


\subsection{Random Matrices and Vectors}
In this subsection, we introduce some properties of Gaussian random matrices and vectors. 
\begin{lemma}
  {\label{lemma:properties of random vectors}}
  Let $u \in \mathbb{R}^d$ be sampled from $\mathcal{N}(0,I_d)$.
  \begin{enumerate}
    \item~\cite[Lemma~1]{nesterov2017random}
      \label{lemma:norm gaussian}
      For $p \in [0,2]$, we have
      \begin{equation}
        \label{eq:bound of expectation of norm1}
        \E_{u} \left[\|u\|^p \right]\le d^{p/2}.
      \end{equation}
      If $p \ge 2$, then we have the two-side bounds
      \begin{equation}
        \label{eq:bound of expectation of norm2}
        d^{p/2} \le \E_{u} \left[\|u\|^p\right] \le (d + p)^{p/2}. 
      \end{equation}
    \item~\cite{magnus1978moments}
      \label{lemma:trace expectation}
      Let $A \in \mathbb{R}^{d \times d}$ be a symmetric matrix. Then,
      \begin{eqnarray}
        \label{eq:trace expectation1}
        &\E_{u}[u^\top A u] = \trace{A},\\
        \label{eq:trace expectation2}
        &\E_u[(u^\top A u)^2] = (\trace{A})^2 + 2 \trace{A^2}
      \end{eqnarray}
      hold.
    \item~\cite{vershynin2018high}
      \label{lemma:gaussian concentration}
      Consider an $L$-Lipschitz function $f:\mathbb{R}^n\to\mathbb{R}$. Then,
      \[
        \|f(u) - \E_{u}\left[f(u)\right]\|_{\psi_2} \le C L
      \]
      holds, where $C$ is an absolute constant.
  \end{enumerate}
\end{lemma}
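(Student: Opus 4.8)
The plan is to treat the three parts separately, since each rests on a different property of the standard Gaussian. For the first part I would reduce everything to moments of the chi-square distribution: writing $\|u\|^2 = \sum_{i=1}^d u_i^2 \sim \chi_d^2$, we have $\E_u[\|u\|^p] = \E[(\chi_d^2)^{p/2}]$, and $\E_u[\|u\|^2] = d$ follows from $\E[u_i^2]=1$. The map $t \mapsto t^{p/2}$ is concave for $p \in [0,2]$ and convex for $p \ge 2$, so a single application of Jensen's inequality to $\E_u[\|u\|^2]=d$ gives $\E_u[\|u\|^p] \le d^{p/2}$ in the first regime and $\E_u[\|u\|^p] \ge d^{p/2}$ in the second; this already establishes \eqref{eq:bound of expectation of norm1} and the lower bound in \eqref{eq:bound of expectation of norm2}. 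For the remaining upper bound I would use the exact moment formula $\E[(\chi_d^2)^{p/2}] = 2^{p/2}\Gamma\!\left(\tfrac{d+p}{2}\right)/\Gamma\!\left(\tfrac{d}{2}\right)$ and bound the Gamma ratio using the log-convexity of $\Gamma$, which yields the claimed bound $(d+p)^{p/2}$.

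For the second part I would exploit rotational invariance. The identity \eqref{eq:trace expectation1} is immediate from linearity and $\E_u[u_i u_j] = \delta_{ij}$, giving $\E_u[u^\top A u] = \sum_i A_{ii} = \trace{A}$. For \eqref{eq:trace expectation2} I would diagonalize the symmetric matrix as $A = Q\Lambda Q^\top$ with $Q$ orthogonal and set $v = Q^\top u$, which is again $\mathcal{N}(0,I_d)$-distributed by the orthogonal invariance of the standard Gaussian; then $u^\top A u = \sum_i \lambda_i v_i^2$ is a weighted sum of independent squared standard normals. Expanding the square and using $\E[v_i^4] = 3$ together with $\E[v_i^2 v_j^2] = 1$ for $i \ne j$ collapses to $(\sum_i \lambda_i)^2 + 2\sum_i \lambda_i^2 = (\trace{A})^2 + 2\trace{A^2}$.

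The third part is the Gaussian Lipschitz concentration inequality. Centering $g := f - \E_u[f(u)]$, I would aim to show that the moment generating function obeys $\E_u[\exp(\lambda g)] \le \exp(L^2 \lambda^2 / 2)$ for all $\lambda$; by the Chernoff bound this gives the sub-Gaussian tail $\mathrm{Prob}(|g| \ge t) \le 2\exp(-t^2/(2L^2))$, which is precisely the form \eqref{eq:property of subgaussian} characterizing $\|g\|_{\psi_2} \le CL$. The natural route to the MGF bound is the Herbst argument: feed the Gaussian logarithmic Sobolev inequality the function $e^{\lambda g/2}$, use the chain-rule bound $\|\nabla f\| \le L$, and integrate the resulting differential inequality for $\lambda \mapsto \tfrac{1}{\lambda}\log \E_u[e^{\lambda g}]$.

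The two moment computations are routine; the main obstacles are the upper bound in the first part and the concentration in the third. The Gamma-ratio estimate must hold uniformly in $d$ for every $p \ge 2$, so one needs a clean monotone bound on $\Gamma(s+a)/\Gamma(s)$ rather than an asymptotic one. The genuinely nontrivial step is the third part: sub-Gaussian concentration cannot be extracted from moment computations alone and requires a functional inequality for the Gaussian measure (log-Sobolev, or equivalently Gaussian isoperimetry); moreover, since $f$ is only assumed Lipschitz and not differentiable, one must first justify the chain rule by a smoothing and approximation argument (mollifying $f$, applying the inequality, and passing to the limit) before the Herbst computation goes through.
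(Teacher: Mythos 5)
Your three arguments are all sound, but it is worth noting that the paper does not actually prove this lemma at all: each part is stated with a citation (Nesterov--Spokoiny for the moment bounds, Magnus for the quadratic-form moments, Vershynin for Gaussian concentration) and the burden of proof is delegated entirely to those references. What you have written is therefore a genuine supplement rather than a parallel to anything in the text. Within that, your routes are the standard ones and they work: Jensen applied to the $\chi^2_d$ variable in the two concavity/convexity regimes gives \eqref{eq:bound of expectation of norm1} and the lower half of \eqref{eq:bound of expectation of norm2}; the diagonalization $A=Q\Lambda Q^\top$ with $\E[v_i^4]=3$, $\E[v_i^2v_j^2]=1$ collapses to $(\trace{A})^2+2\trace{A^2}$ exactly as you say; and Herbst plus the Gaussian log-Sobolev inequality (with the mollification step you correctly flag for non-differentiable Lipschitz $f$) is the textbook proof of part 3, which is how \cite{vershynin2018high} obtains it up to the reduction from isoperimetry. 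The one place where your route departs from the cited source is the upper bound $(d+p)^{p/2}$: Nesterov and Spokoiny prove it by bounding $t^{p/2}\le (p/(2\tau e))^{p/2}e^{\tau t}$ and optimizing the resulting Laplace-transform integral over $\tau$, whereas you go through the exact moment $2^{p/2}\Gamma((d+p)/2)/\Gamma(d/2)$ and a Gautschi-type estimate $\Gamma(x+a)/\Gamma(x)\le (x+a)^a$ from log-convexity of $\Gamma$ (which for $a>1$ needs the small induction $\Gamma(x+a)/\Gamma(x)=(x+a-1)\,\Gamma(x+a-1)/\Gamma(x)\le (x+a-1)^a$ peeling off one unit at a time --- make that explicit, since the one-step log-convexity interpolation alone only covers $a\in[0,1]$). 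Your version is arguably cleaner because it is exact-then-bounded rather than bounded-then-optimized, at the cost of invoking properties of the Gamma function; either is acceptable here.
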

In particular, when $p=2$, the following relationship is derived from simple calculations: 
\begin{equation}
  \label{eq:squared norm expectation}
  \E_{u}\left[\|u\|^2\right] = d.
\end{equation} 
From Lemma~\ref{lemma:properties of random vectors}.\ref{lemma:gaussian concentration}, we can evaluate the sub-Gaussian norm $\|f(u) - \E_{u}[f(u)]\|_{\psi_2}$ for random variables $u\sim \mathcal{N}(0,\mu^2I_n)$ as follows.
\begin{corollary}{\label{corollary:gaussian concentration}}
  Consider a random vector $u\sim \mathcal{N}(0,I_n)$ and an $L$-Lipschitz function $f:\mathbb{R}^n\to\mathbb{R}$.
  Then,
  \[
    \|f(\mu u) - \E_{u}\left[f(\mu u)\right]\|_{\psi_2} \le C \mu L
  \]
  holds.
\end{corollary}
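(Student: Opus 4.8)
The plan is to reduce the statement directly to Lemma~\ref{lemma:properties of random vectors}.\ref{lemma:gaussian concentration} by a rescaling argument. The lemma as stated applies to an arbitrary $L$-Lipschitz function evaluated at a standard Gaussian vector $u \sim \mathcal{N}(0, I_n)$, whereas here we are evaluating $f$ at the rescaled vector $\mu u$. So the first move is to absorb the scaling $\mu$ into the function rather than into the random vector.

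Concretely, I would define the auxiliary function $g: \mathbb{R}^n \to \mathbb{R}$ by $g(v) := f(\mu v)$. The key routine verification is that $g$ inherits Lipschitz continuity with an amplified constant: for any $v, w$,
\[
  |g(v) - g(w)| = |f(\mu v) - f(\mu w)| \le L\|\mu v - \mu w\| = \mu L \|v - w\|,
\]
so $g$ is $\mu L$-Lipschitz. Here I only use the $L$-Lipschitz continuity of $f$ together with homogeneity of the Euclidean norm.

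With $g$ in hand, I would apply Lemma~\ref{lemma:properties of random vectors}.\ref{lemma:gaussian concentration} to $g$ (which is now a plain Lipschitz function evaluated at the standard Gaussian $u \sim \mathcal{N}(0, I_n)$), obtaining $\|g(u) - \E_u[g(u)]\|_{\psi_2} \le C \cdot (\mu L)$, where $C$ is the same absolute constant. Finally I would unwind the definition, noting that $g(u) = f(\mu u)$ and hence $\E_u[g(u)] = \E_u[f(\mu u)]$, so the bound reads exactly $\|f(\mu u) - \E_u[f(\mu u)]\|_{\psi_2} \le C\mu L$, as claimed.

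I do not anticipate any genuine obstacle: the entire content is the observation that scaling the Gaussian argument by $\mu$ is equivalent to scaling the Lipschitz constant by $\mu$, after which the corollary is an immediate specialization of the ambient Gaussian concentration lemma. The only point requiring a moment's care is confirming that the absolute constant $C$ is unchanged under this substitution, which it is, since the lemma's constant does not depend on the Lipschitz modulus.
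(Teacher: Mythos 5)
Your proposal is correct and matches the paper's intended argument: the paper derives the corollary directly from Lemma~\ref{lemma:properties of random vectors}.\ref{lemma:gaussian concentration} by exactly this observation that $v \mapsto f(\mu v)$ is $\mu L$-Lipschitz. Nothing is missing.
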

  
From Corollary~\ref{corollary:gaussian concentration} and the property of the sub-Gaussian norm~\eqref{eq:property of subgaussian}, we have
\begin{equation}
  \label{eq:gaussian concentration prob}
  \mathrm{Prob}(|f(\mu u) - \E_{u}\left[f(\mu u)\right]| \ge t) \le 2 \exp{\left(-\frac{ct^2}{\mu^2L^2}\right)},
\end{equation}
where the constant $c$ is independent of $n$.
Next, we recall some properties of random matrices.
\begin{lemma}
  \label{lemma:properties of random matrix}
  Let $P\in \mathbb{R}^{n\times d}(d < n)$ be a random matrix whose entries are independently drawn from $\mathcal{N}(0,1)$.
  \begin{enumerate}
    \item{\label{lemma:Johnson}}~\cite{vershynin2018high}
      Then for any $x\in \mathbb{R}^n$ and $\varepsilon\in (0,1)$, we have 
      \[
        \mathrm{Prob}\left[(1-\varepsilon)\|x\|^2\le \frac{1}{d}\|P^\top x\|^2\le (1+\varepsilon)\|x\|^2\right]\ge 1-2\exp{(-C_0\varepsilon^2 d)},
      \]
      where $C_0$ is an absolute constant.
    \item {\label{lemma: PPTx}}
      Then for any $x\in \mathbb{R}^n$, we have
      \begin{equation}
        {\label{eq: PPT x}}
        \E_{P}\left[\|PP^\top x\|^2 \right]\le 2(n+4)(d+4)\|x\|^2.
      \end{equation}
    \item ~\cite[Theorem~I\hspace{-1.2pt}I.13]{davidson2001local}
      \label{lemma:minimum singular value}
      Let $\beta = d/n$ with $d \le n$. Then for any $t > 0$,
      \[
        \mathrm{Prob}\left(\sigma_{\min}\left(\frac{1}{\sqrt{n}}P\right) \le 1 - \sqrt{\beta} -t\right) < \exp(-nt^2/2)
      \]
      holds, where $\sigma_{\min}$ denotes the minimum nonzero singular value of a matrix.
  \end{enumerate}
\end{lemma}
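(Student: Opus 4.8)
The plan is to exploit the column structure of $P$. Writing $P=[q_1,\dots,q_d]$, the columns $q_1,\dots,q_d\in\mathbb{R}^n$ are independent $\mathcal{N}(0,I_n)$ vectors, and $PP^\top=\sum_{j=1}^d q_j q_j^\top$. Substituting this into $\|PP^\top x\|^2 = x^\top PP^\top PP^\top x$ and expanding yields the double sum
\[
  \|PP^\top x\|^2 = \sum_{j=1}^d\sum_{l=1}^d (q_j^\top x)(q_l^\top x)(q_j^\top q_l).
\]
I would then take the expectation termwise, splitting the sum according to whether $j=l$ or $j\neq l$.

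For the off-diagonal terms ($j\neq l$) I would use independence of $q_j$ and $q_l$: conditioning on $q_l$ and using $\E_{q_j}[q_j q_j^\top]=I_n$ reduces $\E[(q_j^\top x)(q_l^\top x)(q_j^\top q_l)]$ to $\E_{q_l}[(q_l^\top x)^2]=\|x\|^2$. Since there are $d(d-1)$ such ordered pairs, their contribution is $d(d-1)\|x\|^2$. For the diagonal terms ($j=l$) the quantity to evaluate is $\E[(q^\top x)^2\|q\|^2]$ for a single $q\sim\mathcal{N}(0,I_n)$. Writing $(q^\top x)^2 = q^\top(xx^\top)q$ and $\|q\|^2 = q^\top I_n q$, this is a product of two Gaussian quadratic forms, which I would evaluate with the polarized version of Lemma~\ref{lemma:properties of random vectors}.\ref{lemma:trace expectation}, namely $\E[(q^\top A q)(q^\top B q)] = \trace{A}\trace{B}+2\trace{AB}$ (obtained from \eqref{eq:trace expectation2} by polarization). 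Taking $A=xx^\top$ and $B=I_n$ gives $(n+2)\|x\|^2$ per term, hence $d(n+2)\|x\|^2$ in total.

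Adding the two contributions yields the exact identity $\E_P[\|PP^\top x\|^2]=d(n+d+1)\|x\|^2$. Since $d\le n$ gives $n+d+1\le 2n+1$, we have $d(n+d+1)\le 2dn+d\le 2(n+4)(d+4)$, so the claimed bound \eqref{eq: PPT x} follows with room to spare.

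The main obstacle I anticipate is the bookkeeping in the double-sum expansion: the matrix $P$ enters $\|PP^\top x\|^2$ twice, so one must carefully track the dependence between the factors and treat the diagonal and off-diagonal terms separately rather than collapsing $PP^\top$ prematurely. The only genuinely nontrivial computation is the fourth-moment quantity $\E[(q^\top x)^2\|q\|^2]$ in the diagonal terms; everything else reduces to linear algebra and the elementary moment facts already recorded in Lemma~\ref{lemma:properties of random vectors}. Because the target bound $2(n+4)(d+4)$ is far from tight, I expect the final step to need only a crude inequality once the exact expectation is in hand, so no delicate estimation is required.
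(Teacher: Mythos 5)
Your proof is correct and yields the exact identity $\E_P\left[\|PP^\top x\|^2\right]=d(n+d+1)\|x\|^2$, which is sharper than the stated bound. The paper takes the dual decomposition: it writes $P=(u_1,\dots,u_n)^\top$ in terms of its $n$ rows $u_i\in\mathbb{R}^d$, expands $\|PP^\top x\|^2$ into a triple sum over row indices, discards the odd-moment terms, and is left with $\sum_i\E[\|u_i\|^4]x_i^2+\sum_i\sum_{k\neq i}d\,x_k^2$; it then invokes the moment bound $\E[\|u_i\|^4]\le(d+4)^2$ from Lemma~\ref{lemma:properties of random vectors}.\ref{lemma:norm gaussian} to obtain $((d+4)^2+d(n-1))\|x\|^2$ and finally $2(n+4)(d+4)\|x\|^2$. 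Your column decomposition $PP^\top=\sum_{j}q_jq_j^\top$ reduces the bookkeeping to a double sum over $d^2$ pairs rather than a triple sum over row indices, and the polarized form of \eqref{eq:trace expectation2}, $\E[(q^\top Aq)(q^\top Bq)]=\trace{A}\trace{B}+2\trace{AB}$, evaluates the diagonal term exactly as $(n+2)\|x\|^2$ instead of bounding a fourth moment from above; consequently you get the precise constant $d(n+d+1)$ where the paper settles for the slightly larger $(d+4)^2+d(n-1)$. Both arguments are elementary termwise moment computations exploiting independence and the vanishing of odd Gaussian moments; yours is a little cleaner and exact, while the paper's avoids the mixed quadratic-form identity at the cost of a looser intermediate estimate. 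In either case the final inequality \eqref{eq: PPT x} follows with room to spare, so the proposal is sound.
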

\begin{proof}[Proof of Lemma~\ref{lemma:properties of random matrix}.\ref{lemma: PPTx}]
  We define $P = \left(u_1,u_2,\ldots,u_n\right)^\top$, where $u_i$ is a $d$-dimensional vector. Then, we have $(PP^\top)_{ij} = u^\top_i u_j$, and therefore,
  \begin{eqnarray*}
    &&\E_{P}\left[\|PP^\top x\|^2 \right]\\
    &&=\E_{u_1,..,u_n}\left[ \sum^n_{i=1}\sum^n_{k = 1}\sum^n_{l = 1}u_l^\top u_i u_i^\top u_k x_kx_l \right]\\
    && = \E_{u_1,..,u_n}\left[\sum^n_{i=1}\sum^n_{k = 1}u_k^\top u_i u_i^\top u_k x_k^2 + \sum^n_{i=1}\sum^n_{k = 1}\sum^n_{l \neq k}u_l^\top u_i u_i^\top u_k x_kx_l\right]\\
    && = \E_{u_1,..,u_n}\left[\sum^n_{i=1}\|u_i\|^4 x_i^2 + \sum^n_{i=1}\sum^n_{k \neq i}u_k^\top u_i u_i^\top u_k x_k^2
      + \sum^n_{i=1}\sum^n_{l \neq i}u_l^\top u_i u_i^\top u_i x_ix_l + \sum^n_{i=1}\sum^n_{k \neq i}\sum^n_{l \neq k}u_l^\top u_i u_i^\top u_k x_kx_l\right].
  \end{eqnarray*}
  Regarding the third and fourth terms on the right-hand side, since the index $l$ is not equal to the other indices $i,k$, and given that $\E_{u_l}[u_l] = 0$, we have $\E_{u_1,..,u_n}[u_l^\top u_i u_i^\top u_i x_ix_l] = 0$ and $\E_{u_1,..,u_n}[u_l^\top u_i u_i^\top u_k x_kx_l] = 0$.
  Similarly, regarding the second term on the right-hand side, since the index $i$ is not equal to $k$ and $\E_{u_i}[u_iu_i^\top] = I$, we have  $\E_{u_i}[u_k^\top u_i u_i^\top u_k x_k^2] = \|u_k\|^2 x_k^2$. 
  Hence, we obtain
  \begin{eqnarray*}
    &\E_{P}\left[\|PP^\top x\|^2\right] 
    & = \E_{u_1,..,u_n}\left[\sum^n_{i=1}\|u_i\|^4 x_i^2 + \sum^n_{i=1}\sum^n_{k \neq i}\|u_k\|^2 x_k^2\right]\\
    && \eqlabeled{\eqref{eq:squared norm expectation}} \E_{u_1,..,u_n}\left[\sum^n_{i=1}\|u_i\|^4 x_i^2 + \sum^n_{i=1}\sum^n_{k \neq i}d x_k^2\right]\\
    && \lelabeled{\eqref{eq:bound of expectation of norm2}} \sum^n_{i=1}(d+4)^2 x_i^2 + \sum^n_{i=1}\sum^n_{k \neq i} dx_k^2\\
    && = ((d+4)^2 + d(n-1) )\|x\|^2\\
    && {\color{black} \lelabeled{($d \le n,\; 1 \le n$)} ((d+4)(n+4) + (d+4)(n+4) )\|x\|^2}\\
    && {\color{black} =} 2(n+4)(d+4)\|x\|^2.
  \end{eqnarray*}
  \qed
\end{proof}

\color{black}
\begin{remark}
  \label{remark:l2 randomized}
  $L_2$ randomized smoothing is defined by \eqref{eq:smoothing} with 
  the random vector $u$ sampled from the uniform distribution on the sphere of radius $1$ (i.e., $\|u\|_2=1$), and it is
  known in \cite{duchi2012randomized,gasnikov2022power} to have a dimension-independent upper bound for $f_{\mu}(x)$ as
\begin{equation}
    \label{eq:L2smoothing_function} 
f(x) \leq f_{\mu}(x) \leq f(x) + \mu L.
 \end{equation}
The difference between \eqref{eq:error smoothing function} and \eqref{eq:L2smoothing_function}
comes from the norm of random vectors. In terms of upper bounds of $f_{\mu}(x)$, there is no essential difference between these two smoothing methods.

Indeed, our oracle complexity analysis also holds when the random vector $u$ is sampled from the uniform distribution on the sphere of radius $\sqrt{n}$. 
  We can prove Lemmas~\ref{lemma:relations f and f_mu} and \ref{lemma:properties of random vectors} for a vector $u$ uniformly sampled on the sphere of radius $\sqrt{n}$. 
  For example,  
  \[
    f(x) \le \E_{u}f(x + \mu u) \le f(x) + \mu L \sqrt{n}
  \]
  holds from \cite{duchi2012randomized,gasnikov2022power}.
  Instead of Lemma~\ref{lemma:properties of random vectors}.\ref{lemma:norm gaussian} we can directly state $\|u\| = \sqrt{n}$ and equation~\eqref{eq:trace expectation1} also holds.
  For equation~\eqref{eq:trace expectation2}, $\E_{u}[(u^\top A u)^2] = (\trace{A})^2$ holds.
  Lemma~\ref{lemma:properties of random vectors}.\ref{lemma:gaussian concentration} holds for the uniform distribution on the sphere from \cite[Theorem~5.1.4]{vershynin2018high}.
  Then, the arguments in the following sections hold when we use these lemmas instead of Gaussian versions that are described in this section.
\end{remark}
\color{black}

\section{Proposed Algorithm}
\label{seciton:algorithm}
In this section, we describe the Randomized Gradient-Free method (RGF)~\cite{nesterov2017random} and our proposed method.
  RGF, a random search method, updates $x_k$ by \eqref{eq:update RGF}, where $g_{\mu}(x,u)$ denotes the approximation of the gradient at $x$ along a random direction $u$.
  The method can use forward differences or central differences for $g_{\mu}(x,u)$, i.e.,
  \[
    g_{\mu}(x,u) = \frac{f(x + \mu u) - f(x)}{\mu} u, \; \mbox{ or } \; g_{\mu}(x,u) = \frac{f(x + \mu u) - f(x-\mu u)}{2\mu} u,
  \]
  respectively.
  For a convex and Lipschitz continuous objective function,
  RGF using forward differences achieves iteration complexity of $O(\frac{n^2}{\varepsilon^2})$~\cite{nesterov2017random} and RGF using central differences achieves one of $O(\frac{n}{\varepsilon^2})$~\cite{shamir2017optimal}.
  {\color{black} For our proposed method, we confirm that using central differences attains better oracle complexity than using forward differences.}
  \begin{algorithm}[tb]
    \caption{Randomized gradient-free method (RGF) \cite{nesterov2017random}}
    \label{algorithm:RGF}
    \begin{algorithmic}
      \REQUIRE $x_0$, $\{\alpha_k\}$, $\{\mu_k\}$.
      \FOR {$k = 0,1,2,...$}
      \STATE Sample $u_k$ from $\mathcal{N}(0,I_n)$
      \STATE \begin{equation}\label{eq:update RGF}x_{k+1} = x_{k} - \alpha_k g_{\mu_k}(x_k,u_k)\end{equation}
      \ENDFOR 
    \end{algorithmic}
  \end{algorithm}

  Combining RGF and random projections, we propose Algorithm~\ref{algorithm:subspace gradient free}.
  \begin{algorithm}[tb]
    \caption{Subspace randomized gradient-free method}
    \label{algorithm:subspace gradient free}
    \begin{algorithmic}
      \REQUIRE $x_0$, $\{\alpha_k\}$, $\{\mu_k\}$, $d$
      \FOR {$k = 0,1,2,...$}
      \STATE Get a random matrix $P_k\in \mathbb{R}^{n\times d}$ whose entries are sampled from $\mathcal{N}(0,1)$. 
      \STATE ${\color{black} h^{(k)}}(u) := f(x_k + \frac{1}{\sqrt{n}}P_ku)$
      \STATE Sample $u_k$ from $\mathcal{N}(0,I_d)$
      \STATE \begin{equation}{\label{eq:update in algorithm}}x_{k+1} = x_{k} - \alpha_k \frac{{\color{black} h^{(k)}}(\mu_k u_k) - {\color{black} h^{(k)}}(-\mu_k u_k)}{2\mu_k}P_k u_k\end{equation}
      \ENDFOR 
    \end{algorithmic}
  \end{algorithm}
  We define ${\color{black} h^{(k)}}(u)$ as the restriction of $f$ to the random subspace,
  \begin{equation}
    \label{eq:subspace function}
    {\color{black} h^{(k)}}(u):= f(x_k + \frac{1}{\sqrt{n}}P_k u),
  \end{equation}
  where $P_k \in \mathbb{R}^{n \times d}$ is a random matrix, whose elements are sampled from $\mathcal{N}(0,1)$.
  Lemma~\ref{lemma:subspace smoothing} shows that the expectation of random matrices generates smoothing functions. 
  \begin{lemma}
    \label{lemma:subspace smoothing}
    \begin{eqnarray}
      \label{eq:subspace smoothing function}
      &\E_{P_k} [{\color{black} h^{(k)}}(\mu_k u_k)] = \E_{P_k}[{\color{black} h^{(k)}}(-\mu_k u_k)] = f_{ \frac{\mu_k \|u_k\|}{\sqrt{n}} }(x_k),\\
      \label{eq:subspace gradient of smoothing function}
      &\E_{P_k} \left[\frac{{\color{black} h^{(k)}}(\mu_k u_k)}{\mu_k}P_k u_k\right] = \E_{P_k} \left[-\frac{{\color{black} h^{(k)}}(-\mu_k u_k)}{\mu_k}P_k u_k\right] = \frac{\|u_k\|^2}{\sqrt{n}} \nabla f_{\frac{\mu_k \|u_k\|}{\sqrt{n}}} (x_k)
    \end{eqnarray}
    hold.
  \end{lemma}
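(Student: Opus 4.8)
The plan is to exploit the fact that, for a \emph{fixed} vector $u_k\in\mathbb{R}^d$, the random vector $P_k u_k$ is itself Gaussian. First I would observe that, writing the rows of $P_k$ as independent $\mathcal{N}(0,I_d)$ vectors, each coordinate $(P_k u_k)_i=\sum_{j=1}^d (P_k)_{ij}(u_k)_j$ is a linear combination of independent standard Gaussians and hence $\mathcal{N}(0,\|u_k\|^2)$; since the rows of $P_k$ are independent, these coordinates are independent, so $P_k u_k\sim\mathcal{N}(0,\|u_k\|^2 I_n)$. Consequently $\frac{1}{\sqrt{n}}P_k u_k\sim\mathcal{N}\!\left(0,\frac{\|u_k\|^2}{n}I_n\right)$, and we may represent it as $\frac{\|u_k\|}{\sqrt{n}}w$ with $w\sim\mathcal{N}(0,I_n)$. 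This single distributional identity is the engine behind both equations, and I expect it to be the only substantive step: once the projected perturbation is identified as an isotropic Gaussian of the correct scale, everything reduces to the definition of Gaussian smoothing with a rescaled radius.

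For \eqref{eq:subspace smoothing function}, I would substitute this representation into $h^{(k)}(\mu_k u_k)=f\!\left(x_k+\frac{\mu_k}{\sqrt{n}}P_k u_k\right)=f\!\left(x_k+\frac{\mu_k\|u_k\|}{\sqrt{n}}w\right)$ and take $\E_{P_k}$, which under the representation equals $\E_w$. Comparing with Definition~\ref{def:smoothing function} with smoothing radius $\mu=\frac{\mu_k\|u_k\|}{\sqrt{n}}$ immediately yields $f_{\frac{\mu_k\|u_k\|}{\sqrt{n}}}(x_k)$. The $-\mu_k u_k$ case follows because $-w$ has the same law as $w$, so the two expectations coincide.

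For \eqref{eq:subspace gradient of smoothing function}, I would note that $P_k u_k=\sqrt{n}\cdot\frac{1}{\sqrt{n}}P_k u_k=\|u_k\|w$ under the same representation, so that
\[
\E_{P_k}\left[\frac{h^{(k)}(\mu_k u_k)}{\mu_k}P_k u_k\right]=\frac{\|u_k\|}{\mu_k}\,\E_w\left[f\!\left(x_k+\tfrac{\mu_k\|u_k\|}{\sqrt{n}}w\right)w\right].
\]
Applying the gradient identity \eqref{eq:gradient of smoothing function} with $\mu=\frac{\mu_k\|u_k\|}{\sqrt{n}}$ shows the inner expectation equals $\frac{\mu_k\|u_k\|}{\sqrt{n}}\nabla f_{\frac{\mu_k\|u_k\|}{\sqrt{n}}}(x_k)$; multiplying by $\frac{\|u_k\|}{\mu_k}$ gives $\frac{\|u_k\|^2}{\sqrt{n}}\nabla f_{\frac{\mu_k\|u_k\|}{\sqrt{n}}}(x_k)$, as claimed. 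The $-\mu_k u_k$ expression, with its leading minus sign, is again handled by the $w\mapsto-w$ symmetry, which converts it into the $+\mu_k u_k$ expression and produces the identical value.

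The only real obstacle is making the first step rigorous, in particular verifying that the coordinates of $P_k u_k$ are jointly Gaussian with the stated isotropic covariance; but this is a standard consequence of the stability of Gaussian vectors under linear maps together with the row-independence of $P_k$. After that, the entire argument is merely a change of the smoothing parameter from $\mu_k$ to $\frac{\mu_k\|u_k\|}{\sqrt{n}}$.
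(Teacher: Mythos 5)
Your proposal is correct and follows essentially the same route as the paper: both identify $P_k u_k \sim \mathcal{N}(0,\|u_k\|^2 I_n)$, replace it by $\|u_k\|$ times a standard Gaussian to reduce everything to the definition of the smoothing with parameter $\frac{\mu_k\|u_k\|}{\sqrt{n}}$ and to identity \eqref{eq:gradient of smoothing function}, and dispose of the $-\mu_k u_k$ cases by the sign symmetry of the Gaussian (the paper phrases this as $\E_{P_k}[F(P_k)]=\E_{P_k}[F(-P_k)]$, which is the same argument). No gaps.
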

  \begin{proof}
    Using rotational invariance of normal distribution, we have that $\E_{P_k} [F(P_k)] = \E_{P_k}[F(-P_k)]$ holds for any function $F$. Using this property, we obtain
    \[
      \E_{P_k} \left[{\color{black} h^{(k)}}(\mu_k u_k)\right] = \E_{P_k} \left[f(x_k + \frac{\mu_k}{\sqrt{n}}P_k u_k) \right]= \E_{P_k} \left[f(x_k - \frac{\mu_k}{\sqrt{n}}P_k u_k)\right] = \E_{P_k} \left[{\color{black} h^{(k)}}(-\mu_k u_k)\right].
    \]
    Notice that the distribution of $P_ku_k$ is given by $\mathcal{N}(0,\|u_k\|^2I_n)$.
    We can therefore replace $P_ku_k$ by $\|u_k\|z_k$, where $z_k$ is sampled from $\mathcal{N}(0,I_n)$.
    Then, we obtain
    \begin{equation}
      \label{eq:smoothing subspace function}
      \E_{P_k} [{\color{black} h^{(k)}}(\mu_k u_k)] 
      = \E_{z_k}\left[f(x_k + \frac{\mu_k\|u_k\|}{\sqrt{n}} z_k )\right]
      = f_{\frac{\mu_k\|u_k\|}{\sqrt{n}}}(x_k).
    \end{equation}
    Similarly, we have
    \begin{eqnarray*}
      \E_{P_k} \left[{\color{black} h^{(k)}}(\mu_k u_k)P_ku_k\right] & = & \E_{P_k} \left[f(x_k + \frac{\mu_k}{\sqrt{n}}P_k u_k)P_ku_k\right] \\
& = & -\E_{P_k} \left[f(x_k - \frac{\mu_k}{\sqrt{n}}P_k u_k)P_ku_k \right]= -\E_{P_k} \left[{\color{black} h^{(k)}}(-\mu_k u_k)P_ku_k\right],
    \end{eqnarray*}
    and 
    \begin{eqnarray*}
      \E_{P_k} \left[\frac{{\color{black} h^{(k)}}(\mu_k u_k)}{\mu_k}P_k u_k\right]
      & = & \E_{z_k}\left[\frac{f(x_k + \frac{\mu_k\|u_k\|}{\sqrt{n}} z_k )}{\mu_k} \|u_k\|z_k\right] \\
      & =  & \frac{\|u_k\|^2}{\sqrt{n}}\E_{z_k}\left[\frac{ f(x_k + \frac{\mu_k\|u_k\|}{\sqrt{n}} z_k ) }{ \frac{\mu_k \|u_k\|}{\sqrt{n} } } z_k\right]
      \eqlabeled{\eqref{eq:gradient of smoothing function}} \frac{\|u_k\|^2}{\sqrt{n}} \nabla f_{\frac{\mu_k\|u_k\|}{\sqrt{n}}}(x_k).
    \end{eqnarray*}
    \qed
  \end{proof}

\section{Global Convergence}
\label{section:global convergence}
  
  In this section, we prove global convergence of our proposed method for convex and Lipschitz continuous functions. We define $\mathcal{U}_k := (u_0,P_0,...,u_{k-1},P_{k-1})$.
  {\color{black}\begin{assumption}{\label{assumption:Lipschitz continuous and convex}}
    $f$ is $L$-Lipschitz continuous, i.e.,
    \[
      |f(x) - f(y)| \le L\|x-y\|
    \] holds for all $x,y$, and convex.
  \end{assumption}
  }

  \begin{theorem}
    \label{theorem:global convergence}
    Suppose that Assumption~\ref{assumption:Lipschitz continuous and convex} holds.
    Let the sequence $\{x_k\}$ be generated by Algorithm~\ref{algorithm:subspace gradient free}. Then for any $N\ge 1$, 
    \[
      \sum_{k=0}^{N-1} \alpha_k(\E_{\mathcal{U}_k}[f(x_k)] - f(x^*)) \le \frac{\sqrt{n}}{2d}\|x_0-x^*\|^2 + \frac{L(d+3)^{3/2}}{d}\sum_{k=0}^{N-1}\mu_k\alpha_k + \frac{L^2(d+4)^2(n+4)}{cd\sqrt{n}}\sum_{k=0}^{N-1}\alpha_k^2
    \]
    holds.
  \end{theorem}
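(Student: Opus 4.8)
The plan is to run the classical distance-to-optimum (subgradient-method) analysis on the iterates, treating the random search direction as a stochastic subgradient of a smoothed surrogate. Writing $g_k := \frac{h^{(k)}(\mu_k u_k) - h^{(k)}(-\mu_k u_k)}{2\mu_k} P_k u_k$ for the update direction, so that $x_{k+1} = x_k - \alpha_k g_k$, I would start from the expansion $\|x_{k+1} - x^*\|^2 = \|x_k - x^*\|^2 - 2\alpha_k\langle g_k, x_k - x^*\rangle + \alpha_k^2\|g_k\|^2$ and take the conditional expectation $\E_k[\cdot]$ over $(u_k, P_k)$ given $\mathcal{U}_k$, so that $x_k$ is fixed. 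Everything then reduces to lower-bounding $\langle \E_k[g_k], x_k - x^*\rangle$ and upper-bounding $\E_k[\|g_k\|^2]$, after which I rearrange to isolate $f(x_k)-f(x^*)$, divide by the coefficient $\tfrac{2d}{\sqrt n}$ that appears in front of it, take total expectation, and telescope the distance terms over $k=0,\dots,N-1$.

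For the cross term, Lemma~\ref{lemma:subspace smoothing} identifies $\E_{P_k}[g_k] = \frac{\|u_k\|^2}{\sqrt n}\nabla f_{\mu}(x_k)$ with $\mu = \frac{\mu_k\|u_k\|}{\sqrt n}$, the two halves of the central difference contributing equally. Since each $f_\mu$ is convex, $\langle \nabla f_\mu(x_k), x_k - x^*\rangle \ge f_\mu(x_k) - f_\mu(x^*)$, and the two-sided bound of Lemma~\ref{lemma:relations f and f_mu} turns the right-hand side into $f(x_k) - f(x^*) - \mu_k\|u_k\|L$. Taking $\E_{u_k}$ and using $\E[\|u_k\|^2]=d$ from \eqref{eq:squared norm expectation} together with $\E[\|u_k\|^3]\le(d+3)^{3/2}$ from \eqref{eq:bound of expectation of norm2} yields $\langle\E_k[g_k],x_k-x^*\rangle \ge \frac{d}{\sqrt n}(f(x_k)-f(x^*)) - \frac{\mu_k L(d+3)^{3/2}}{\sqrt n}$, which after the division produces the error term $\frac{L(d+3)^{3/2}}{d}\mu_k\alpha_k$.

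The delicate part is $\E_k[\|g_k\|^2]$, and this is where I expect the main difficulty. The naive route, bounding the difference quotient by $\frac{L}{\sqrt n}\|P_k u_k\|$ via Lipschitzness, only gives $\|g_k\|^2 \le \frac{L^2}{n}\|P_k u_k\|^4$ and hence a factor $(n+4)^2$, one power of $n$ too many; this is exactly the gap between forward and central differences. To recover the sharp linear factor $(n+4)$, I would instead exploit Gaussian concentration. Conditioning on $u_k$, I write $P_k u_k = \|u_k\|z$ with $z\sim\mathcal N(0,I_n)$ and set $\rho = \frac{\mu_k\|u_k\|}{\sqrt n}$, so that $h^{(k)}(\pm\mu_k u_k) = f(x_k\pm\rho z)$; by Corollary~\ref{corollary:gaussian concentration} the centered value $\xi := f(x_k+\rho z) - f_\rho(x_k)$ and its reflection $\xi' := f(x_k-\rho z)-f_\rho(x_k)$ are sub-Gaussian with norm $\lesssim \rho L$. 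Bounding $(f(x_k+\rho z)-f(x_k-\rho z))^2 \le 2\xi^2 + 2\xi'^2$ and using the $z\mapsto -z$ symmetry reduces the estimate to $\E_z[\xi^2\|z\|^2]$, which by Cauchy--Schwarz, the fourth-moment bound for sub-Gaussian variables, and $\E[\|z\|^4]\le(n+4)^2$ is $\lesssim \rho^2 L^2(n+4)$. Collecting the $\frac{\|u_k\|^2}{\mu_k^2}$ prefactor, substituting $\rho^2 = \frac{\mu_k^2\|u_k\|^2}{n}$, and finally taking $\E_{u_k}[\|u_k\|^4]\le(d+4)^2$ yields $\E_k[\|g_k\|^2] \le \frac{2}{c}\cdot\frac{L^2(d+4)^2(n+4)}{n}$, where $c$ is the sub-Gaussian constant of \eqref{eq:property of subgaussian}.

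It then remains to assemble the pieces: substituting the two bounds into the expanded recursion, rearranging to isolate $\frac{2d}{\sqrt n}\alpha_k(f(x_k)-f(x^*))$, dividing through by $\frac{2d}{\sqrt n}$, taking total expectation over $\mathcal U_k$, and summing over $k$. The distance terms telescope to at most $\|x_0-x^*\|^2$ (giving the $\frac{\sqrt n}{2d}\|x_0-x^*\|^2$ prefactor), the second term becomes $\frac{L(d+3)^{3/2}}{d}\sum_k\mu_k\alpha_k$, and the $\frac{\sqrt n}{2d}\alpha_k^2\E_k[\|g_k\|^2]$ contribution becomes $\frac{L^2(d+4)^2(n+4)}{cd\sqrt n}\sum_k\alpha_k^2$, which is exactly the claimed inequality. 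Aside from the $\E_k[\|g_k\|^2]$ estimate, the only bookkeeping to watch is that the smoothing parameter inside $\nabla f_\mu$ depends on the random $\|u_k\|$, so the convexity and error-bound step must be carried out pointwise in $u_k$ before integrating.
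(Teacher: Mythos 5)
Your proposal is correct and follows essentially the same route as the paper's proof: the same distance-recursion, the same use of Lemma~\ref{lemma:subspace smoothing} plus convexity and Lemma~\ref{lemma:relations f and f_mu} for the cross term, and the same centering-plus-sub-Gaussian-concentration argument (via Corollary~\ref{corollary:gaussian concentration} and H\"older) to get the sharp $(n+4)$ factor in $\E_k[\|g_k\|^2]$ rather than $(n+4)^2$. You correctly identified that this concentration step is the one place where the naive Lipschitz bound fails, which is exactly the crux of the paper's argument.
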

  \begin{proof}
    We define $r_k := \|x_k - x^*\|$.
    From \eqref{eq:update in algorithm}, we have
    \begin{eqnarray} 
      \label{eq:eq1 in global convergence}
      r^2_{k+1}
      = r^2_k - 2\alpha_k \langle \frac{{\color{black} h^{(k)}}(\mu_k u_k) - {\color{black} h^{(k)}}(-\mu_k u_k)}{2\mu_k}P_k u_k ,x_k - x^* \rangle \notag \\
      + \alpha_k^2 \left( \frac{{\color{black} h^{(k)}}(\mu_k u_k) - {\color{black} h^{(k)}}(-\mu_ku_k)}{2\mu_k} \right)^2\|P_ku_k\|^2.
    \end{eqnarray}
    Taking the expectation with respect to $u_k$ and $P_k$, we then evaluate the second and third terms.
    Regarding the second term on the right-hand side of \eqref{eq:eq1 in global convergence}, we have
    \begin{align*}
      \E_{u_k}\E_{P_k} \left[\left\langle \frac{{\color{black} h^{(k)}}(\mu_k u_k) - {\color{black} h^{(k)}}(-\mu_k u_k)}{2\mu_k}P_k u_k ,x_k - x^* \right\rangle\right] &
      \eqeqref{eq:subspace gradient of smoothing function}  \E_{u_k}\left[\left\langle \frac{\|u_k\|^2}{\sqrt{n}} \nabla f_{\frac{\mu_k \|u_k\|}{\sqrt{n}}} (x_k) ,x_k - x^* \right\rangle\right] \\
      &\gelabeled{convexity} \E_{u_k} \left[\frac{\|u_k\|^2}{\sqrt{n}} (f_{\frac{\mu_k \|u_k\|}{\sqrt{n}}} (x_k) - f_{\frac{\mu_k \|u_k\|}{\sqrt{n}}} (x^*))\right]\\
      & \geeqref{eq:error smoothing function} \E_{u_k} \left[\frac{\|u_k\|^2}{\sqrt{n}} (f(x_k) - f(x^*) - L \mu_k \|u_k\|)\right]\\
      & \eqlabeled{\eqref{eq:squared norm expectation}} \frac{d}{\sqrt{n}} (f(x_k) - f(x^*)) - \frac{L \mu_k \E_{u_k}[\|u_k\|^3]}{n^{1/2}} \\
      & \gelabeled{\eqref{eq:bound of expectation of norm2}} \frac{d}{\sqrt{n}} (f(x_k) - f(x^*)) - \frac{L \mu_k (d+3)^{3/2}}{n^{1/2}}.
    \end{align*}
    For the third term on the right-hand side of \eqref{eq:eq1 in global convergence}, from $P_ku_k = \|u_k\|z_k\;(z_k \sim \mathcal{N}(0,I_n))$, we have
    \begin{eqnarray*}
      &\gamma& :=\E_{P_k}\E_{u_k} \left[ \left( \frac{{\color{black} h^{(k)}}(\mu_k u_k) - {\color{black} h^{(k)}}(-\mu_ku_k)}{2\mu_k} \right)^2\|P_ku_k\|^2\right]\\
      && \eqlabeled{\eqref{eq:smoothing subspace function}} \frac{1}{4\mu_k^2}\E_{z_k}\E_{u_k} \left[\left( f(x_k + \frac{\mu_k\|u_k\|}{\sqrt{n}}z_k) - f(x_k - \frac{\mu_k\|u_k\|}{\sqrt{n}}z_k) \right)^2\|u_k\|^2\|z_k\|^2\right]\\
      && = \frac{1}{4\mu_k^2}\E_{z_k}\E_{u_k} \left[\left( (f(x_k + \frac{\mu_k\|u_k\|}{\sqrt{n}}z_k) - \beta) + (\beta - f(x_k - \frac{\mu_k\|u_k\|}{\sqrt{n}}z_k)) \right)^2\|u_k\|^2\|z_k\|^2\right]. \\
    \end{eqnarray*}
    By applying the inequality $(x+y)^2\le 2x^2+2y^2$, we obtain
    \[
      \gamma \le \frac{1}{2\mu_k^2}\E_{z_k}\E_{u_k} \left[\left(\left(f(x_k + \frac{\mu_k\|u_k\|}{\sqrt{n}}z_k) - \beta\right)^2 + \left(\beta - f(x_k - \frac{\mu_k\|u_k\|}{\sqrt{n}}z_k) \right)^2\right)\|u_k\|^2\|z_k\|^2\right].
    \]
    From the rotational invariance of $z_k$, we have
    \[
      \gamma \le \frac{1}{2\mu_k^2}\E_{z_k}\E_{u_k}\left[ \left(\left(f(x_k + \frac{\mu_k\|u_k\|}{\sqrt{n}}z_k) - \beta\right)^2 + \left(\beta - f(x_k + \frac{\mu_k\|u_k\|}{\sqrt{n}}z_k) \right)^2\right)\|u_k\|^2\|z_k\|^2\right].
    \] 
    Selecting $\beta = \E_{z_k}\left[f(x_k + \frac{\mu_k\|u_k\|}{\sqrt{n}}z_k)\right]$, we have
    \begin{eqnarray*}
      &\gamma
      &\le \frac{1}{\mu_k^2}\E_{u_k}\left[\|u_k\|^2\E_{z_k} \left[\left(f(x_k + \frac{\mu_k\|u_k\|}{\sqrt{n}}z_k) - \E_{z_k}\left[f(x_k + \frac{\mu_k\|u_k\|}{\sqrt{n}}z_k)\right]\right)^2\|z_k\|^2\right]\right]\\
      &&\lelabeled{H\"older's ineq.} \frac{1}{\mu_k^2}\E_{u_k}\left[\|u_k\|^2\sqrt{\E_{z_k} \left[\left(f(x_k + \frac{\mu_k\|u_k\|}{\sqrt{n}}z_k) - \E_{z_k}\left[f(x_k + \frac{\mu_k\|u_k\|}{\sqrt{n}}z_k)\right]\right)^4\right]}\sqrt{\E_{z_k}[\|z_k\|^4]}\right].
    \end{eqnarray*}
    We evaluate $\E_{z_k} \left[\left(f(x_k + \frac{\mu_k\|u_k\|}{\sqrt{n}}z_k) - \E_{z_k}\left[f(x_k + \frac{\mu_k\|u_k\|}{\sqrt{n}}z_k)\right]\right)^4\right]$ using Corollary~\ref{corollary:gaussian concentration}.
    Note that for any non-negative random variables $X$, $\E_{X}[X] = \int_0^\infty \mathrm{Prob}(X\ge t) \mathrm{d}t$ holds. Using this relation, we have
    \begin{eqnarray*}
      &&\E_{z_k} \left[\left(f(x_k + \frac{\mu_k\|u_k\|}{\sqrt{n}}z_k) - \E_{z_k}\left[f(x_k + \frac{\mu_k\|u_k\|}{\sqrt{n}}z_k)\right]\right)^4\right]\\
      && \hspace{100pt}= \int_{0}^{\infty} \mathrm{Prob}\left(\left(f(x_k + \frac{\mu_k\|u_k\|}{\sqrt{n}}z_k) - \E_{z_k}\left[f(x_k + \frac{\mu_k\|u_k\|}{\sqrt{n}}z_k)\right]\right)^4 \ge t\right) \mathrm{d}t\\
      && \hspace{100pt}= \int_{0}^{\infty} \mathrm{Prob}\left(\left|f(x_k + \frac{\mu_k\|u_k\|}{\sqrt{n}}z_k) - \E_{z_k}\left[f(x_k + \frac{\mu_k\|u_k\|}{\sqrt{n}}z_k)\right]\right| \ge t^{1/4}\right) \mathrm{d}t\\
      && \hspace{100pt} \lelabeled{\eqref{eq:gaussian concentration prob}} \int_{0}^{\infty} 2\exp{\left(-\frac{cnt^{1/2}}{\mu_k^2\|u_k\|^2L^2}\right)}\mathrm{d}t\\
      && \hspace{100pt} = \frac{\mu_k^4\|u_k\|^4L^4}{c^2n^2}\int_{0}^{\infty} 2\exp{\left(-T^{1/2}\right)}\mathrm{d}T\\
      && \hspace{100pt} = \frac{4\mu_k^4\|u_k\|^4L^4}{c^2n^2},
    \end{eqnarray*}
    where the last equality follows from $\int_0^\infty \exp{\left(-T^{1/2}\right)}\mathrm{d}T = 2$.
    Then, we obtain
    \begin{eqnarray}
      \notag
      &\gamma
      &\le \frac{1}{\mu_k^2}\E_{u_k}\left[\|u_k\|^2 \sqrt{\frac{4\mu_k^4\|u_k\|^4L^4}{c^2n^2}}\sqrt{\E_{z_k}[\|z_k\|^4]}\right]\\
      \notag
      && = \frac{2L^2}{cn} \E_{u_k}[\|u_k\|^4] \sqrt{\E_{z_k}[\|z_k\|^4]}\\
      \label{eq:evaluation of gamma}
      && \lelabeled{\eqref{eq:bound of expectation of norm2}} \frac{2L^2(d+4)^2(n+4)}{cn}.
    \end{eqnarray}
    Therefore, we obtain
    \[
      \E_{u_k}\E_{P_k} [r_{k+1}^2]
      \le r_k^2 
      -  \frac{2d\alpha_k}{\sqrt{n}} (f(x_k) - f(x^*)) 
      + \frac{2L\mu_k \alpha_k (d+3)^{3/2}}{n^{1/2}} 
      + \frac{2L^2\alpha_k^2(d+4)^2(n+4)}{cn}.
    \]
    Taking the expectation with respect to $\mathcal{U}_k$, we obtain
    \[
      \E_{\mathcal{U}_{k+1}}[r_{k+1}^2] 
      \le \E_{\mathcal{U}_{k}}[r_{k}^2]  
      -  \frac{2d\alpha_k}{\sqrt{n}} (\E_{\mathcal{U}_k}\left[f(x_k)\right] - f(x^*)) 
      + \frac{2L \mu_k \alpha_k (d+3)^{3/2}}{n^{1/2}} 
      + \frac{2L^2\alpha_k^2(d+4)^2(n+4)}{cn}.
    \]
    Summing up these inequalities from $k=0$ and $k=N-1$, we obtain
    \[
      \E_{\mathcal{U}_{N}}[r_{N}^2]
      \le r_{0}^2  
      -  \frac{2d}{\sqrt{n}} \sum_{k = 0}^{N-1} \alpha_k(\E_{\mathcal{U}_k}\left[f(x_k)\right] - f(x^*)) 
      + \frac{2L(d+3)^{3/2}}{n^{1/2}}\sum_{k=0}^{N-1} \mu_k \alpha_k  
      + \frac{2L^2(d+4)^2(n+4)}{cn} \sum_{k=0}^{N-1} \alpha_k^2. 
    \]
    Therefore, 
    \[
      \sum_{k = 0}^{N-1} \alpha_k(\E_{\mathcal{U}_k}\left[f(x_k)\right] - f(x^*)) 
      \le \frac{\sqrt{n}}{2d}r_0^2 
      +  \frac{L(d+3)^{3/2}}{d}\sum_{k=0}^{N-1} \mu_k \alpha_k  
      + \frac{L^2(d+4)^2(n+4)}{c\sqrt{n}d}  \sum_{k=0}^{N-1} \alpha_k^2. 
    \]
    \qed
  \end{proof}
  With fixed $\alpha_k = \alpha$ and $\mu_k = \mu$, we obtain
  \[
    \min_{0\le i \le N-1} \E_{\mathcal{U}_i}[f(x_i)] - f(x^*) \le \frac{\sqrt{n}}{2dN\alpha}r_0^2 +  \frac{L(d+3)^{3/2}}{d}\mu  + \frac{L^2(d+4)^2(n+4)}{cd\sqrt{n}}  \alpha.
  \]
  From this relation, the oracle complexity for achieving 
  the inequality: $\min_{0\le i \le N-1} \E_{\mathcal{U}_i}[f(x_i)] - f(x^*) \leq \varepsilon$ is 
  \[
    N = \frac{8r_0^2L^2 (n+4)(d+4)^2}{c^2d^2\varepsilon^2} = O\left(\frac{n}{\varepsilon^2}\right)
  \] 
  with
  \[
    \alpha = \frac{\sqrt{cn}r_0}{L(d+4)\sqrt{2(n+4)N}}, \;\mu \le \frac{\varepsilon d}{2L(d+3)^{3/2}}.
  \]
  These parameters are obtained by the relations of $\frac{\sqrt{n}}{2dN\alpha}r_0^2 = \frac{L^2(d+4)^2(n+4)}{cd\sqrt{n}}  \alpha =\frac{\varepsilon}{4}$ and $\frac{L(d+3)^{3/2}}{d}\mu \le \frac{\varepsilon}{2}$.

Note that in each iteration, our algorithm calculates the function value twice.
    Therefore, the oracle complexity is equal to twice the iteration complexity. 

\section{Local Convergence}
\label{section:local convergence}
In this section, we prove, under some local assumptions on $f$, local convergence of our proposed method.

\subsection{Assumptions}
\begin{assumption}
  {\label{assumption:reduced dimension assumption1}}
  We have that $d = o(n)$, and $d\to \infty$ as $n\to \infty$. 
\end{assumption}

Next we consider the following local assumptions on $f$. 
\begin{assumption}
  \label{assumption:local properties}
  There exists a neighborhood $B^*$ of $x^*$ and an Alexandrov matrix $\tilde{H}(x)$  that satisfy the following properties.
  \begin{enumerate}[label = (\roman{enumi}),ref = (\roman{enumi})]
    \item \label{assumption:relative strongly convex}
    There exist constants $\tilde{L}$ and $\tau$, and a subgradient $g\in \partial f(y)$ such that for all $x, y\in B^*$:
    \begin{eqnarray}
      \label{eq:relative convex}
      f(x) \ge f(y) + \langle g,x-y\rangle + \frac{\tau}{2}(x-y)^\top \tilde{H}(y) (x-y),\\
      \label{eq:relative smooth}
      f(x) \le f(y) + \langle g,x-y\rangle + \frac{\tilde{L}}{2}(x-y)^\top \tilde{H}(y) (x-y).
    \end{eqnarray}
    \item {\label{assumption:local rank}}
    There exist constants $\sigma \in (0,1)$ and $\bar{\lambda}>0$ such that $\lambda_{\sigma n}(\tilde{H}(x)) \ge \bar{\lambda}$ holds for all $x \in B^*$.
  \end{enumerate}
\end{assumption}

  When the objective function is twice differentiable, we set $g = \nabla f(y)$ and $\tilde{H}(y)=\nabla^2 f(y)$.
  In the smooth setting, \eqref{eq:relative convex} and \eqref{eq:relative smooth} in Assumption~\ref{assumption:local properties}\ref{assumption:relative strongly convex} are called relative convexity and smoothness~\cite{gower2019rsn,karimireddy2018global}, respectively, and some functions achieve this property (e.g. logistic function, Wasserstein distance).
  This assumption implies that the objective function $f$ is $\tau$-strongly convex and $\tilde{L}$-smooth under the semi-norm $\|\cdot\|_{\tilde{H}(x)}$ for all $x,y \in B^*$. 
  
  While non-smooth objective functions do not necessarily have gradients and Hessians at some points,
    we can show that when $f$ is convex, $f$ is twice differentiable almost everywhere.
  \begin{theorem}~\cite{alexandrov1939almost,niculescu2006convex}
    \label{theorem:convex twice differentiable}
    Every convex function $f:\mathbb{R}^n\to \mathbb{R}$ is twice differentiable almost everywhere in the following sense: $f$ is twice differentiable at $z$ with Alexandrov Hessian $\tilde{H}(z) = \tilde{H}^\top(z) \succeq 0$, if $\nabla f(z)$ exists, and if for every $\varepsilon > 0$ there exists $\delta > 0$ such that
    $\|x-z\|\le \delta$ implies
    \[
      \sup_{y\in \partial f(x)} \|y- \nabla f(z) - \tilde{H}(z)(x-z)\| \le \varepsilon\|x-z\|.
    \]
  \end{theorem}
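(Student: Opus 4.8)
The plan is to follow the classical measure-theoretic route to Alexandrov's theorem, since twice differentiability in the stated sense is a purely local, almost-everywhere property. First I would localize: fix a bounded open convex set $\Omega$; on $\Omega$ the convex function $f$ is Lipschitz, so by Rademacher's theorem $\nabla f(z)$ exists for a.e.\ $z$, which already supplies the first-order ingredient required in the statement. The structural input from convexity is that the distributional second derivatives $\partial_{ij} f$ are signed Radon measures $\mu_{ij}$ on $\Omega$, and that the matrix-valued measure $\mu = (\mu_{ij})$ is positive semidefinite, i.e.\ for each direction $\xi$ the combination $\sum_{ij}\xi_i\xi_j\,\mu_{ij}$ is a nonnegative measure. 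This follows because $\xi^\top D^2 f\,\xi$ is the distributional second derivative of the one-dimensional convex function $t\mapsto f(z+t\xi)$, hence a nonnegative measure.

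Next I would apply the Lebesgue decomposition to each entry, writing $\mu_{ij} = H_{ij}\,\mathcal{L}^n + \mu_{ij}^s$ with density $H_{ij}\in L^1_{loc}(\Omega)$ and singular part $\mu_{ij}^s \perp \mathcal{L}^n$, and define the candidate Alexandrov Hessian by $\tilde{H}(z):=(H_{ij}(z))$. Positive semidefiniteness of $\mu$ transfers to $\tilde{H}(z)\succeq 0$ for a.e.\ $z$, and symmetry $\tilde{H}(z)=\tilde{H}^\top(z)$ follows since mixed distributional derivatives commute.

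The heart of the argument, and the step I expect to be the main obstacle, is upgrading these a.e.\ and $L^1$/measure statements to the pointwise second-order expansion with the supremum over $\partial f(x)$. I would single out a full-measure set of ``good'' points $z$ that are simultaneously: (i) Lebesgue points of $\tilde{H}$; (ii) zero-density points of the total-variation measure $|\mu^s|$ (via Besicovitch differentiation); and (iii) Lebesgue points of $\nabla f$, so that $\nabla f(z)$ is recovered as an approximate limit. At such a $z$ I would mollify, setting $f_\varepsilon = f * \rho_\varepsilon$, so that $f_\varepsilon$ is smooth and convex with $D^2 f_\varepsilon = \mu * \rho_\varepsilon$; writing the exact Taylor formula $\nabla f_\varepsilon(x) - \nabla f_\varepsilon(z) = \int_0^1 D^2 f_\varepsilon\bigl(z+t(x-z)\bigr)(x-z)\,dt$ and passing to the limit, the absolutely continuous part converges to $\tilde{H}(z)(x-z)$ up to $o(\|x-z\|)$ by (i), while the singular part contributes $o(\|x-z\|)$ by (ii). To pass from $\nabla f$ to the full subdifferential and obtain the uniform bound $\sup_{y\in\partial f(x)}\|y-\nabla f(z)-\tilde{H}(z)(x-z)\|\le \varepsilon\|x-z\|$, I would exploit monotonicity of $\partial f$: any $y\in\partial f(x)$ is squeezed between gradient values at nearby differentiability points, and the one-sided monotone inequalities combined with the expansion above close the estimate.

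The delicate point throughout is condition (iii) together with the uniformity over $\partial f(x)$: a.e.\ differentiability of the gradient map in the integrated sense does not by itself deliver the pointwise quadratic expansion at individual points, so the careful selection of good points and the interplay between convex monotonicity and the Besicovitch/Lebesgue differentiation theorems is essential. (An alternative I would keep in reserve recasts $\partial f$ as a maximal monotone operator whose resolvent $(I+\partial f)^{-1}$ is nonexpansive by Minty's theorem, hence differentiable a.e.\ by Rademacher, from which differentiability of $\partial f$ is extracted.) Since this is a classical result, in the paper it suffices to invoke the references \cite{alexandrov1939almost,niculescu2006convex} rather than reproduce this argument in full.
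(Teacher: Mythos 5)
The paper offers no proof of this statement: it is Alexandrov's theorem, imported verbatim from the cited references \cite{alexandrov1939almost,niculescu2006convex}, so there is nothing in the text to compare your argument against step by step. Your sketch follows the standard measure-theoretic route (as in Evans--Gariepy or Niculescu--Persson): Rademacher's theorem for the a.e.\ existence of $\nabla f$, the structural fact that the distributional Hessian of a convex function is a positive semidefinite matrix of Radon measures, Lebesgue decomposition to extract the density $\tilde{H}$, and a selection of good points via Besicovitch differentiation. That is the correct skeleton, and your reserve alternative via the nonexpansive resolvent $(I+\partial f)^{-1}$ and Rademacher is also a genuine known proof strategy.

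One step is stated more optimistically than it can be executed. In the mollification argument you write $\nabla f_\varepsilon(x)-\nabla f_\varepsilon(z)=\int_0^1 D^2 f_\varepsilon\bigl(z+t(x-z)\bigr)(x-z)\,dt$ and claim the limit of the absolutely continuous part is controlled ``by (i)'', i.e.\ by $z$ being a Lebesgue point of $\tilde{H}$. A Lebesgue point condition controls averages over balls centred at $z$, not integrals along a single segment $[z,x]$: for fixed $x$ the segment is a Lebesgue-null set, and the Lebesgue point property says nothing about the behaviour of $\tilde{H}$ or of $|\mu^s|$ restricted to it. The standard repair is to first establish the $L^1$-averaged second-order estimate $\frac{1}{|B(z,r)|}\int_{B(z,r)}\|\nabla f(x)-\nabla f(z)-\tilde{H}(z)(x-z)\|\,dx=o(r)$ (which the ball-averaged Lebesgue/Besicovitch conditions do deliver), and only then invoke the monotonicity of $\partial f$ to upgrade it to a pointwise bound uniform over $y\in\partial f(x)$: one compares $y$ with gradient values at points of a set of large relative density in a small ball around $x$ where the averaged estimate already gives pointwise control, and the one-sided monotone inequalities squeeze $y$. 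You correctly flag this as the delicate point and name the right tools, so the defect is one of execution rather than conception; but as written the implication ``(i)--(iii) $\Rightarrow$ pointwise expansion along each segment'' is not yet a proof, and the uniformity over the whole subdifferential is asserted rather than derived.
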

  Assumption~\ref{assumption:local properties}\ref{assumption:relative strongly convex} is inspired by Theorem~\ref{theorem:convex twice differentiable}, because there exists $\tilde{H}(x)$ almost everywhere such that
  \[
    \lim_{h \to 0} \frac{f(x+h) - f(x) -\langle\nabla f(x),h \rangle -\frac12 h^\top \tilde{H}(x)h }{\|h\|^2} = 0.
  \]
  Note that the subgradient $g$ and the matrix $\tilde{H}(x)$ are used only in the analysis, not in our algorithm.

In the following subsection, we assume the above two assumptions in addition to
Assumption~\ref{assumption:Lipschitz continuous and convex}.
  The theoretical results in this section can only hold locally around an optimal solution $x^*$.
  Indeed, we assume Lipschitz continuity as Assumption~\ref{assumption:Lipschitz continuous and convex}
  and relative convexity as Assumption~\ref{assumption:local properties}\ref{assumption:relative strongly convex}.
  This implies that $f(x) - f(x^*) = O(\|x - x^*\|)$ and $f(x) - f(x^*) = \Omega(\|x - x^*\|^2)$ hold, and then as $\|x - x^*\| \to \infty$, these assumptions conflict. 

\subsection{Local Theoretical Guarantees}
We define ${\color{black} h^{(k)}_{\mu_k} }(u)$ as the smoothing function on the random subspace:
\begin{equation}
  \label{eq:smoothing subspace function for u}
  {\color{black} h^{(k)}_{\mu_k} }(u) := \E_{u_k} [{\color{black} h^{(k)}}(u + \mu_k u_k)] = \E_{u_k} \left[f\left(x_k + \frac{1}{\sqrt{n}}P_k(u + \mu_k u_k)\right)\right].
\end{equation}

From \eqref{eq:gradient of smoothing function}, we have
\begin{eqnarray}
  \label{eq:gradient of smoothing subspace function for u}
  &\nabla {\color{black} h^{(k)}_{\mu_k} }(u) = \E_{u_k} \left[\frac{{\color{black} h^{(k)}}(u + \mu_k u_k) - {\color{black} h^{(k)}}(u)}{\mu_k} u_k\right] = \E_{u_k} \left[\frac{{\color{black} h^{(k)}}(u + \mu_k u_k) - {\color{black} h^{(k)}}(u -\mu_k u_k)}{2\mu_k} u_k\right]. \hspace{20pt}
\end{eqnarray}

Under some assumptions, we can show that $P ^\top \tilde{H} (x) P$ is a positive definite matrix with high probability.
\begin{proposition}
  \cite[Proposition~5.4]{fuji2022randomized}
  {\label{prop:strongly convexify}}
  Let $0<\varepsilon_0 < 1$. Then under Assumptions~\ref{assumption:Lipschitz continuous and convex}, \ref{assumption:reduced dimension assumption1} and \ref{assumption:local properties}\ref{assumption:local rank}, there exists $n_0 \in \mathbb{N}$
  (which depends only on $\varepsilon_0$ and $\sigma$) such that if $n \ge n_0$, for any $x \in B^*$,
  \[
    P ^\top \tilde{H} (x) P  \succeq \frac{(1-\varepsilon_0)^2 n}{2}\sigma^2 \bar{\lambda} I_d
  \]
  holds with probability at least $1-6\exp{(-d)}$.
\end{proposition}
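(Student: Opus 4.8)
The plan is to reduce the claim to a lower bound on the smallest singular value of a Gaussian matrix, after isolating the ``good'' eigenspace of $\tilde{H}(x)$ provided by Assumption~\ref{assumption:local properties}\ref{assumption:local rank}. Fix $x \in B^*$ and take the spectral decomposition $\tilde{H}(x) = \sum_{i=1}^{n} \lambda_i v_i v_i^\top$ with $\lambda_1 \ge \cdots \ge \lambda_n \ge 0$ and orthonormal eigenvectors $v_i$. Set $m := \lfloor \sigma n \rfloor$ and let $V := (v_1,\ldots,v_m) \in \mathbb{R}^{n\times m}$ collect the top $m$ eigenvectors. Since $\lambda_{\sigma n}(\tilde{H}(x)) \ge \bar{\lambda}$, each $\lambda_i$ with $i \le m$ is at least $\bar\lambda$, so $\tilde{H}(x) \succeq \bar\lambda\, V V^\top$ in the positive semidefinite order. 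Conjugating by $P$ preserves this inequality, and therefore
\[
  P^\top \tilde{H}(x) P \;\succeq\; \bar\lambda\,(V^\top P)^\top(V^\top P) \;\succeq\; \bar\lambda\,\sigma_{\min}(V^\top P)^2\, I_d ,
\]
so it suffices to lower bound $\sigma_{\min}(V^\top P)$.

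Next I would observe that $G := V^\top P$ is again a Gaussian matrix with independent $\mathcal{N}(0,1)$ entries: because $V$ has orthonormal columns, each column of $G$ is $V^\top p_j$ with $p_j \sim \mathcal{N}(0,I_n)$, hence $V^\top p_j \sim \mathcal{N}(0, V^\top V) = \mathcal{N}(0, I_m)$, with the columns independent. Thus $G \in \mathbb{R}^{m\times d}$ is exactly the object to which Lemma~\ref{lemma:properties of random matrix}.\ref{lemma:minimum singular value} applies, with $m$ in the role of the ambient dimension. Taking $\beta = d/m$, the lemma gives, for any $t>0$,
\[
  \sigma_{\min}(G) \;>\; \sqrt{m}\bigl(1 - \sqrt{d/m} - t\bigr)
\]
with probability at least $1 - \exp(-mt^2/2)$.

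Finally I would chase constants. By Assumption~\ref{assumption:reduced dimension assumption1} we have $d = o(n)$, so $\sqrt{d/m} = \sqrt{d/\lfloor \sigma n\rfloor} \to 0$ and $m/n \to \sigma$. Fix any $t$ small enough that $(1-t)^2 > \tfrac12$ (e.g.\ $t = 1/4$); since $\sigma < 1$ and $\varepsilon_0 < 1$ force $\tfrac{(1-\varepsilon_0)^2\sigma}{2} < \tfrac12$, there is $n_0$, depending only on $\varepsilon_0$ and $\sigma$, such that for all $n \ge n_0$ the factor $\bigl(1 - \sqrt{d/m} - t\bigr)^2$ exceeds $\tfrac{(1-\varepsilon_0)^2\sigma}{2}$, whence
\[
  \sigma_{\min}(G)^2 \;\ge\; m\bigl(1 - \sqrt{d/m} - t\bigr)^2 \;\ge\; \frac{(1-\varepsilon_0)^2\sigma^2 n}{2}.
\]
Combining with the first display yields $P^\top \tilde{H}(x) P \succeq \tfrac{(1-\varepsilon_0)^2\sigma^2 n}{2}\bar\lambda\, I_d$. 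For the probability, since $m = \Theta(n)$ grows much faster than $d = o(n)$, we have $mt^2/2 \ge d$ for $n \ge n_0$, so the failure probability is at most $\exp(-mt^2/2) \le \exp(-d) \le 6\exp(-d)$, which is even stronger than claimed.

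The main obstacle is constant bookkeeping rather than any deep estimate: I must check that the slack between the natural bound $\sigma_{\min}(G)^2 \approx \sigma n$ and the stated target $\tfrac{(1-\varepsilon_0)^2\sigma^2 n}{2}$ (a factor of order $2/\sigma$) can always be arranged by choosing $t$ and $n_0$ as functions of $\varepsilon_0,\sigma$ alone, and that the discrepancy between $\lfloor \sigma n\rfloor$ and $\sigma n$ is harmless for $n\ge n_0$. It is precisely Assumption~\ref{assumption:reduced dimension assumption1} that does double duty here, simultaneously forcing $\sqrt{d/m}\to 0$ in the singular-value bound and making the Davidson--Szarek failure probability $\exp(-mt^2/2)$ dominated by $\exp(-d)$; both follow from $m = \Theta(n)$ growing much faster than $d$.
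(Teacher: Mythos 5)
Your argument is correct. Note that the paper itself does not prove this proposition --- it imports it verbatim by citation from \cite[Proposition~5.4]{fuji2022randomized} --- so there is no in-paper proof to compare against; what you have produced is a self-contained derivation using only tools already stated in the paper, namely Lemma~\ref{lemma:properties of random matrix}.\ref{lemma:minimum singular value}. The chain of reductions is sound: $\tilde{H}(x)\succeq\bar\lambda VV^\top$ uses Assumption~\ref{assumption:local properties}\ref{assumption:local rank} together with $\tilde H(x)\succeq 0$ (which is where convexity enters, via Theorem~\ref{theorem:convex twice differentiable}); the observation that $V^\top P$ is again an i.i.d.\ Gaussian matrix of shape $m\times d$ is the key step that lets you invoke the Davidson--Szarek bound with $m=\lfloor\sigma n\rfloor$ in place of $n$; and the constant-chasing works because $(1-t)^2$ with $t=1/4$ clears the target $\tfrac{(1-\varepsilon_0)^2\sigma}{2}<\tfrac12$ with room to absorb both the $\sqrt{d/m}\to 0$ term and the $\lfloor\sigma n\rfloor$ versus $\sigma n$ discrepancy (strictly, the squared factor must exceed $\tfrac{(1-\varepsilon_0)^2\sigma^2 n}{2m}$, which is slightly larger than $\tfrac{(1-\varepsilon_0)^2\sigma}{2}$ since $m\le\sigma n$, but this is covered by the same slack). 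You even obtain the stronger failure probability $\exp(-d)$ rather than $6\exp(-d)$. One caveat, inherited from the statement rather than introduced by you: your $n_0$ in fact depends on the rate at which $d/n\to 0$ (through both $\sqrt{d/m}\to 0$ and $mt^2/2\ge d$), not solely on $\varepsilon_0$ and $\sigma$; this is a looseness already present in the proposition as quoted.
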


Now, we prove local convergence of our proposed method.
\begin{theorem}
  \label{theorem:local convergence}
  Let $0 < \varepsilon_0 < 1$ and $n_0$ be as defined in Proposition~\ref{prop:strongly convexify}, and the sequence $\{x_k\}$ be generated by Algorithm~\ref{algorithm:subspace gradient free}.
  Suppose that Assumptions~\ref{assumption:Lipschitz continuous and convex}, \ref{assumption:reduced dimension assumption1}, and \ref{assumption:local properties} hold.
  If $n \ge n_0$, $x_k \in B^*$, and $\mu_k \le \frac{1}{\trace{P_k^\top \tilde{H}(x_k) P_k}}$ hold for any $k \ge 0$, then for any $N\ge 1$,
  at least one of the following holds:
  \begin{enumerate}
    \item \begin{eqnarray}{\label{eq:local result1}}\sum_{k = 0}^{N-1} \alpha_k(\E_{\mathcal{U}_k}[f(x_k)] - f(x^*)) \le \frac{1}{2}r_0^2 +  L\sqrt{d}\sum_{k=0}^{N-1}  \alpha_k\mu_k + \frac{L^2(n+4)(d+4)^2}{cn}  \sum_{k=0}^{N-1} \alpha_k^2,\hspace{30pt}\end{eqnarray}
    \item \begin{equation}{\label{eq:local result2}}\min_{0\le k \le N-1}f(x_k) - f(x^*) \le LC_1 \sqrt{{\color{black}\frac{8d+41}{d^2}}} + \frac{LC_2}{n\sqrt{d}},\end{equation}
  \end{enumerate} 
  where $C_1 := \frac{8L}{(1-\varepsilon_0)^2 \tau C \sigma^2 \bar{\lambda}}$, $C_2:= \frac{2\sqrt{3}\tilde{L}}{(1-\varepsilon_0)^2 \tau C \sigma^2 \bar{\lambda}}$, and $C := 1 - 6\exp(-d) - 2\exp(-\frac{C_0d}{4})$.
\end{theorem}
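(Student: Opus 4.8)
The plan is to run the recursion of Theorem~\ref{theorem:global convergence} essentially unchanged, and to inject the subspace strong convexity of Proposition~\ref{prop:strongly convexify} at the single place where the global proof uses only plain convexity. Concretely, I start from the expansion \eqref{eq:eq1 in global convergence} of $r_{k+1}^2=\|x_{k+1}-x^*\|^2$. The quadratic (third) term is handled verbatim: the estimate \eqref{eq:evaluation of gamma} uses no convexity, so it again contributes the variance term $\tfrac{2L^2(n+4)(d+4)^2}{cn}\alpha_k^2$, which is exactly the last summand in \eqref{eq:local result1}. The smoothing error is re-derived by viewing $h^{(k)}$ as a function on $\mathbb{R}^d$ and applying Lemma~\ref{lemma:relations f and f_mu} in dimension $d$; since $h^{(k)}$ is Lipschitz with constant $O(L)$, this produces the $L\sqrt d\,\mu_k$ factor rather than the $\tfrac{(d+3)^{3/2}}{\sqrt n}$ factor of the global proof. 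All the genuinely new work, and all the improvement, is in the linear (inner-product) term.

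For that term I condition on the event $\mathcal{E}_k$ on which (i) Proposition~\ref{prop:strongly convexify} holds, so that $h^{(k)}(u)=f(x_k+\tfrac1{\sqrt n}P_ku)$ is $m$-strongly convex in $u$ with the $n$-independent modulus $m=\tfrac{\tau(1-\varepsilon_0)^2\sigma^2\bar\lambda}{2}$, and (ii) the Johnson--Lindenstrauss estimate of Lemma~\ref{lemma:properties of random matrix}.\ref{lemma:Johnson} holds with $\varepsilon=\tfrac12$ for the vector $x^*-x_k$, so that $P_k^\top P_k\approx nI_d$ and $\|P_k^\top(x^*-x_k)\|^2\approx d\,r_k^2$ are controlled. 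A union bound gives $\mathrm{Prob}(\mathcal{E}_k)\ge C=1-6\exp(-d)-2\exp(-C_0 d/4)$, the constant in the statement (the $2\exp(-C_0d/4)$ being the $\varepsilon=\tfrac12$ specialization of Lemma~\ref{lemma:properties of random matrix}.\ref{lemma:Johnson}). On $\mathcal{E}_k$ I take $\E_{u_k}$ first with $P_k$ fixed, so that by \eqref{eq:gradient of smoothing subspace function for u} the expected descent direction is $P_k\nabla h^{(k)}_{\mu_k}(0)$, and I lower bound $\langle P_k\nabla h^{(k)}_{\mu_k}(0),\,x_k-x^*\rangle$ using strong convexity of the smoothed restriction. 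This is where the effective subspace step, which is a factor $\sqrt n$ larger than $\alpha_k$, together with the $n$-independent modulus $m$, upgrades the coefficient of $f(x_k)-f(x^*)$ from the $\tfrac{d}{\sqrt n}$ of Theorem~\ref{theorem:global convergence} to the dimension-free value implicit in \eqref{eq:local result1}.

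The dichotomy then arises from decomposing $f(x_k)-f(x^*)=\big(f(x_k)-\phi_k^*\big)+\big(\phi_k^*-f(x^*)\big)$, where $\phi_k^*:=\min_u h^{(k)}(u)\ge f(x^*)$ is the best value attainable inside the current affine subspace. The residual $\phi_k^*-f(x^*)$ is bounded through relative smoothness \eqref{eq:relative smooth} at $x^*$ (using $0\in\partial f(x^*)$), evaluated at the projection of $x^*$ onto the subspace, whose size is governed by Lemma~\ref{lemma:properties of random matrix}; this is the origin of the lower-order floor term $\tfrac{LC_2}{n\sqrt d}$ carrying the factor $\tilde L$. The in-subspace gap $f(x_k)-\phi_k^*$ is controlled by a Polyak--\L ojasiewicz-type estimate from $m$-strong convexity of $h^{(k)}$, combined with the moment bounds of Lemma~\ref{lemma:properties of random vectors}; this produces the dominant floor term $LC_1\sqrt{(8d+41)/d^2}=O(1/\sqrt d)$, with $m$ entering through $C_1\propto 1/m$ and the event probability through the $1/C$ factor. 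So long as $f(x_k)-f(x^*)$ stays above this floor, the per-step inequality holds with the improved coefficient and summing over $k$ as in Theorem~\ref{theorem:global convergence} yields \eqref{eq:local result1}; if at some iterate the gap drops below the floor, then passing to $\min_{0\le k\le N-1}$ gives \eqref{eq:local result2}.

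The main obstacle is precisely this linear-term lower bound on $\mathcal{E}_k$: a random $d$-dimensional subspace captures only an $O(d/n)$ fraction of the direction $x^*-x_k$, so plain convexity alone merely reproduces the global coefficient $\tfrac{d}{\sqrt n}$, and it is the strong convexity of $h^{(k)}$ that must compensate this loss. Making this rigorous requires simultaneous control, on $\mathcal{E}_k$, of the strong-convexity modulus, of the distortions $P_k^\top P_k$ and $P_k^\top(x^*-x_k)$, and of the smoothing and gradient-estimation errors, followed by a clean separation of the in-subspace suboptimality from the subspace residual so that the two error contributions assemble into exactly the floor of \eqref{eq:local result2} with the stated $C_1$, $C_2$, and $C$. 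Once this bound is in place, the remaining work—telescoping the recursion and invoking Lemmas~\ref{lemma:relations f and f_mu} and \ref{lemma:properties of random vectors}—is routine.
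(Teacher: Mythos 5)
Your skeleton is right --- reuse the recursion of Theorem~\ref{theorem:global convergence}, keep the variance bound \eqref{eq:evaluation of gamma} verbatim, upgrade only the linear term on a high-probability event combining Proposition~\ref{prop:strongly convexify} with Lemma~\ref{lemma:properties of random matrix}.\ref{lemma:Johnson}, and dichotomize on whether $\|x_k-x^*\|$ exceeds a floor --- and the first half of your linear-term treatment (strong convexity of the restriction applied between $0$ and $-P_k^\top(x_k-x^*)$, giving the gain $\tfrac{m}{2}\|P_k^\top(x_k-x^*)\|^2\gtrsim \tfrac{md}{4}r_k^2$) is essentially what the paper does via \eqref{eq:eq1} and the substitution $u=-P_k^\top(x_k-x^*)$. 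But the decomposition you then build the dichotomy on, $f(x_k)-f(x^*)=(f(x_k)-\phi_k^*)+(\phi_k^*-f(x^*))$ with $\phi_k^*=\min_u h^{(k)}(u)$, has a genuine gap. The subspace residual $\phi_k^*-f(x^*)$ is of order $\|x_k-x^*\|$, not $O(\tfrac{1}{n\sqrt d})$: as you yourself note, a random $d$-dimensional subspace captures only an $O(d/n)$ fraction of the direction $x^*-x_k$, so the distance from $x^*$ to the affine subspace is $\approx\sqrt{1-d/n}\,\|x_k-x^*\|=\Theta(r_k)$, and relative smoothness at $x^*$ does not rescue this (Assumption~\ref{assumption:local properties} gives no upper bound on $\|\tilde H\|$, and even with one you would get $\Theta(r_k^2)$, competing with the strong-convexity gain rather than producing a $\tfrac{1}{n\sqrt d}$ floor). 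In the paper the $\tfrac{LC_2}{n\sqrt d}$ term has a completely different origin: it is the bias between $\nabla h^{(k)}_{\mu_k}(0)$ and the true subgradient $g_k$, bounded in \eqref{eq:product eval} by $\tfrac{\sqrt3\tilde L\mu_k}{4n}\trace{P_k^\top\tilde H(x)P_k}\|u\|$ via the indicator-splitting argument that uses convexity on $\{\langle u_k,u\rangle\ge 0\}$ and relative smoothness on the complement --- and it is exactly here that the hypothesis $\mu_k\le 1/\trace{P_k^\top\tilde H(x_k)P_k}$, which your proposal never invokes, is consumed. Conversely, the dominant floor $LC_1\sqrt{(8d+41)/d^2}$ is precisely the subspace-residual contribution, entering as $L\,\E_{P_k}\|(I_n-\tfrac{1}{\sqrt n}P_kP_k^\top)(x_k-x^*)\|\le L\sqrt{8d+41}\,r_k$ and balanced against the quadratic gain; your attribution of the two floors is reversed.

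There is a second, structural problem with the Polyak--\L ojasiewicz step: a PL inequality lower-bounds $\|\nabla h^{(k)}_{\mu_k}(0)\|^2$ by $2m\,(h^{(k)}_{\mu_k}(0)-\min_u h^{(k)}_{\mu_k}(u))$, but the recursion on $r_k^2$ requires a lower bound on the \emph{inner product} $\langle\nabla h^{(k)}_{\mu_k}(0),P_k^\top(x_k-x^*)\rangle$ with one specific vector, and Cauchy--Schwarz only bounds that inner product from above by the product of norms. Nothing in the PL estimate controls the angle between $\nabla h^{(k)}_{\mu_k}(0)$ and $P_k^\top(x_k-x^*)$, so the in-subspace gap $f(x_k)-\phi_k^*$ cannot be injected into the recursion this way. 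The fix is to abandon $\phi_k^*$ entirely and compare $h^{(k)}_{\mu_k}(0)$ with $h^{(k)}_{\mu_k}(-P_k^\top(x_k-x^*))$ directly, which is the paper's route; once you do that, you also need the separate gradient-bias estimate above to pass from the smoothed gradient to $g_k$ inside the relative-convexity inequality, and the conditional-expectation argument should be applied only to the sign-definite quadratic term $X=(x_k-x^*)^\top P_kP_k^\top\tilde H(x_k)P_kP_k^\top(x_k-x^*)\ge 0$ (so that the complement event can be dropped), not to the whole expression.
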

\begin{proof}
  Let $g_k \in \partial f(x_k)$.
  Using the same argument as in the proof of Theorem~\ref{theorem:global convergence}, from \eqref{eq:eq1 in global convergence} and \eqref{eq:evaluation of gamma},
  we obtain
  \begin{eqnarray}
    \notag
    &\E_{u_k}\E_{P_k} [r_{k+1}^2] &
    \le r_k^2 -  2\alpha_k \E_{u_k}\E_{P_k}\left[ \left\langle\frac{{\color{black} h^{(k)}}(\mu_k u_k) - {\color{black} h^{(k)}}(-\mu_k u_k)}{2\mu_k}P_k u_k ,x_k - x^* \right\rangle\right] + \frac{2L^2\alpha_k^2}{cn} (d+4)^2(n+4)\\
    \label{eq:local first inequality}
    & & \eqlabeled{\eqref{eq:gradient of smoothing subspace function for u}} r_k^2 -  2\alpha_k \E_{P_k} \left[\langle \nabla {\color{black} h^{(k)}_{\mu_k} }(0),P_k^\top(x_k - x^*) \rangle\right] + \frac{2L^2\alpha_k^2}{cn} (d+4)^2(n+4).
  \end{eqnarray}
  We reevaluate the second term on the right-hand side.
  Now, we evaluate the error between $\E_{u_k}[\langle \nabla {\color{black} h^{(k)}_{\mu_k} }(0),u \rangle]$ and $\langle g_k,\frac{1}{\sqrt{n}}P_ku \rangle$ for any $u$.
  From relation~\eqref{eq:relative convex} with $x = x_k + \frac{1}{\sqrt{n}}P_k u + \frac{\mu_k}{\sqrt{n}}P_k u_k$ and $y = x_k $,
  we have
  \[
    f(x_k + \frac{1}{\sqrt{n}}P_ku + \frac{\mu_k}{\sqrt{n}}P_k u_k) \ge f(x_k) + \langle g_k, \frac{1}{\sqrt{n}}P_ku + \frac{\mu_k}{\sqrt{n}}P_k u_k\rangle + \frac{\tau}{2n}(u+\mu_k u_k)^\top P_k^\top \tilde{H}( x_k )P_k (u+\mu_k u_k).
  \]
  Taking the expectation with respect to $u_k$, we obtain
  \begin{eqnarray}
    \notag
    &{\color{black} h^{(k)}_{\mu_k} }(u) 
    &\ge f(x_k) + \E_{u_k}\left[\left\langle g_k, \frac{1}{\sqrt{n}}P_ku + \frac{\mu_k}{\sqrt{n}}P_k u_k\right\rangle\right] + \frac{\tau}{2n}u^\top P_k^\top \tilde{H}( x_k )P_k u + \E_{u_k}\left[\frac{\tau\mu_k}{n} u_k^\top P_k^\top \tilde{H}( x_k )P_k u\right]\\
    \notag
    && \hspace{100pt} + \E_{u_k}\left[\frac{\tau\mu_k^2}{2n} u_k^\top P_k^\top \tilde{H}( x_k )P_k u_k\right] \\
    \notag
    &&\eqlabeled{$\E_{u_k}[u_k] = 0$} f(x_k) +  \langle g_k, \frac{1}{\sqrt{n}}P_ku  \rangle +   \frac{\tau}{2n}u^\top P_k^\top \tilde{H}( x_k )P_k u + \E_{u_k}\left[\frac{\tau\mu_k^2}{2n} u_k^\top P_k^\top \tilde{H}( x_k )P_k u_k\right] \\
    \label{eq:eq1}
    && \eqlabeled{\eqref{eq:trace expectation1}} f(x_k) +  \langle g_k, \frac{1}{\sqrt{n}}P_ku  \rangle +   \frac{\tau}{2n}u^\top P_k^\top \tilde{H}( x_k )P_k u +\frac{\tau\mu_k^2}{2n} \trace{P_k^\top \tilde{H}( x_k )P_k}.
  \end{eqnarray}
  First, we evaluate $\langle g_k, \frac{1}{\sqrt{n}}P_ku  \rangle$. 
  Using $\E_{u_k}[u_ku_k^\top] = I_d$, we have
  \begin{eqnarray}
    &\langle g_k, \frac{1}{\sqrt{n}}P_ku  \rangle &
    \notag
    = \E_{u_k}\langle g_k, \frac{1}{\sqrt{n}}P_ku_ku_k^\top u  \rangle\\
    \notag
    & &= \E_{u_k} \left[\langle g_k, \frac{1}{\sqrt{n}}P_ku_k\rangle \langle u_k,u\rangle \right]\\
    \label{eq:evalueta gTPu}
    & &= \E_{u_k} \left[\langle g_k, \frac{1}{\sqrt{n}}P_ku_k\rangle \langle u_k,u\rangle \mathbf{1}_{u}^+(u_k)\right]
    +\E_{u_k} \left[\langle g_k, \frac{1}{\sqrt{n}}P_ku_k\rangle \langle u_k,u\rangle \mathbf{1}_{u}^-(u_k)\right].
    \hspace{30pt}
  \end{eqnarray}
  We compute both an upper bound and a lower bound for $\langle g_k, \frac{1}{\sqrt{n}}P_ku_k\rangle$.
  From the convexity of $f$, we have 
  \begin{equation}
    \label{eq:upper bound1}
    \frac{f(x_k + \frac{\mu_k}{\sqrt{n}}P_k u_k) - f(x_k)}{\mu_k} 
    \ge \langle g_k,\frac{1}{\sqrt{n}}P_k u_k\rangle.
  \end{equation}
  From relation~(\ref{eq:relative smooth}) with $x = x_k + \frac{\mu_k}{\sqrt{n}}P_ku_k$ and $y = x_k$, we have
  \begin{equation}
    \label{eq:lower bound1}
    \langle g_k,\frac{1}{\sqrt{n}}P_k u_k\rangle + \frac{\tilde{L}\mu_k}{2n} u_k^\top P_k^\top \tilde{H}(x_k) P_k u_k
    \ge \frac{f(x_k + \frac{\mu_k}{\sqrt{n}}P_k u_k) - f(x_k)}{\mu_k}.
  \end{equation}
  Using these relations, we have
  \begin{eqnarray}
    \notag
    &\langle g_k, \frac{1}{\sqrt{n}}P_ku  \rangle &
    \eqlabeled{\eqref{eq:evalueta gTPu}} \E_{u_k} \left[\langle g_k, \frac{1}{\sqrt{n}}P_ku_k\rangle \langle u_k,u\rangle \mathbf{1}_{u}^+(u_k)\right]
    +\E_{u_k} \left[\langle g_k, \frac{1}{\sqrt{n}}P_ku_k\rangle \langle u_k,u\rangle \mathbf{1}_{u}^-(u_k)\right]\\
    \notag
    &&\gelabeled{\eqref{eq:upper bound1},\eqref{eq:lower bound1}} \E_{u_k} \left[\frac{f(x_k + \frac{\mu_k}{\sqrt{n}}P_k u_k) - f(x_k)}{\mu_k} \langle u_k,u\rangle \mathbf{1}_{u}^+(u_k)\right]
    - \E_{u_k}\left[ \frac{\tilde{L}\mu_k}{2n} u_k^\top P_k^\top \tilde{H}(x) P_k u_k  \langle u_k,u\rangle \mathbf{1}_{u}^+(u_k)\right]\\
    \notag
    &&\hspace{100pt}+ \E_{u_k}\left[\frac{f(x_k + \frac{\mu_k}{\sqrt{n}}P_k u_k) - f(x_k)}{\mu_k} \langle u_k,u\rangle \mathbf{1}_{u}^{-}(u_k)\right]\\
    \notag
    && = \E_{u_k} \left[\frac{f(x_k + \frac{\mu_k}{\sqrt{n}}P_k u_k) - f(x_k)}{\mu_k} \langle u_k,u\rangle\right] - \E_{u_k}\left[ \frac{\tilde{L}\mu_k}{2n} u_k^\top P_k^\top \tilde{H}(x) P_k u_k  \langle u_k,u\rangle \mathbf{1}_{u}^+(u_k)\right]\\
    \notag
    && = \E_{u_k} \left[\frac{{\color{black} h^{(k)}}(\mu_ku_k) - {\color{black} h^{(k)}}(0)}{\mu_k} \langle u_k,u\rangle \right]- \E_{u_k} \left[\frac{\tilde{L}\mu_k}{2n} u_k^\top P_k^\top \tilde{H}(x) P_k u_k  \langle u_k,u\rangle \mathbf{1}_{u}^{+}(u_k)\right]\\
    \label{eq:evaluate1}
    && \eqlabeled{\eqref{eq:gradient of smoothing subspace function for u}} \langle\nabla {\color{black} h^{(k)}_{\mu_k} }(0),u\rangle - \E_{u_k} \left[\frac{\tilde{L}\mu_k}{2n} u_k^\top P_k^\top \tilde{H}(x) P_k u_k  \langle u_k,u\rangle \mathbf{1}_{u}^+(u_k)\right].
  \end{eqnarray}
  Regarding the second term, using $\E_{u_k} [F(u_k)] = \E_{u_k} [F(-u_k)]$ from the rotational invariance of the normal distribution, we have
  \begin{eqnarray}
    \notag
    &\E_{u_k}  \left[u_k^\top P_k^\top \tilde{H}(x) P_k u_k  \langle u_k,u\rangle \mathbf{1}_{u}^+(u_k)\right]&
    = \frac{1}{2}\E_{u_k}  \left[u_k^\top P_k^\top \tilde{H}(x) P_k u_k  \langle u_k,u\rangle \mathbf{1}_{u}^+(u_k)\right]\\
    \notag
    && \hspace{100pt}+ \frac{1}{2}\E_{u_k}  \left[u_k^\top P_k^\top \tilde{H}(x) P_k u_k  \langle u_k,u\rangle \mathbf{1}_{u}^+(u_k)\right]\\
    \notag
    &&\eqlabeled{rotation invariant} \frac{1}{2}\E_{u_k}  \left[u_k^\top P_k^\top \tilde{H}(x) P_k u_k  \langle u_k,u\rangle \mathbf{1}_{u}^+(u_k)\right]\\
    \notag
    && \hspace{100pt} - \frac{1}{2}\E_{u_k}  \left[u_k^\top P_k^\top \tilde{H}(x) P_k u_k  \langle u_k,u\rangle \mathbf{1}_{u}^+(-u_k)\right]\\
    \notag
    && = \frac{1}{2}\E_{u_k} \left[ u_k^\top P_k^\top \tilde{H}(x) P_k u_k  \langle u_k,u\rangle \mathbf{1}_{u}^+(u_k)\right]\\
    \notag
    && \hspace{100pt} - \frac{1}{2}\E_{u_k} \left[ u_k^\top P_k^\top \tilde{H}(x) P_k u_k  \langle u_k,u\rangle \mathbf{1}_{u}^-(u_k)\right]\\
    \label{eq:evaluate2}
    && = \frac{1}{2}\E_{u_k} \left[ u_k^\top P_k^\top \tilde{H}(x) P_k u_k  |\langle u_k,u\rangle| \right].
  \end{eqnarray}
  Now, we evaluate $\E_{u_k}\left[ u_k^\top P_k^\top \tilde{H}(x) P_k u_k | \langle u_k,u\rangle|\right]$.
  From H\"older's inequality and Lemma~\ref{lemma:properties of random vectors}.\ref{lemma:trace expectation},
  we obtain
  \begin{eqnarray*}
    &\E_{u_k} \left[u_k^\top P_k^\top \tilde{H}(x) P_k u_k | \langle u_k,u\rangle|\right]
    &\lelabeled{H\"older's ineq.} \sqrt{\E_{u_k} \left[(u_k^\top P_k^\top \tilde{H}(x) P_k u_k)^2\right]}\sqrt{\E_{u_k} \left[| \langle u_k,u\rangle|^2\right]}\\
    && = \sqrt{\E_{u_k} \left[(u_k^\top P_k^\top \tilde{H}(x) P_k u_k)^2\right]}\sqrt{\E_{u_k} \left[u^\top u_ku_k^\top u \right]}\\
    && \eqlabeled{$\E_{u_k}[u_ku_k^\top] = I_d$} \sqrt{\E_{u_k} \left[(u_k^\top P_k^\top \tilde{H}(x) P_k u_k)^2\right]}\sqrt{\|u\|^2}\\
    &&\lelabeled{\eqref{eq:trace expectation2}}  \sqrt{ (\trace{P_k^\top \tilde{H}(x) P_k})^2 + 2\trace{(P_k^\top \tilde{H}(x) P_k)^2}} \|u\|.\\
  \end{eqnarray*}
  For any positive semidefinite matrix $A$, $\trace{A^2} = \sum_{i}\lambda_i(A)^2 \le (\sum_i \lambda_i(A))^2 = (\trace{A})^2$ holds. Then, we have
  \begin{equation}
    \label{eq:evaluate3}
    \E_{u_k} \left[u_k^\top P_k^\top \tilde{H}(x) P_k u_k | \langle u_k,u\rangle|\right] \le \sqrt{3} \trace{P_k^\top \tilde{H}(x) P_k} \|u\|.
  \end{equation}
  Finally, from \eqref{eq:evaluate1},\eqref{eq:evaluate2}, and \eqref{eq:evaluate3}, we obtain
  \begin{equation}
    {\label{eq:product eval}}
      \langle g_k, \frac{1}{\sqrt{n}}P_ku  \rangle 
    \ge \langle \nabla {\color{black} h^{(k)}_{\mu_k} }(0),u \rangle -  \frac{\sqrt{3}\tilde{L}\mu_k}{4n} \trace{P_k^\top \tilde{H}(x) P_k} \|u\|.
  \end{equation}
  Combining relations from \eqref{eq:eq1} and \eqref{eq:product eval},
  we obtain
  \[
    {\color{black} h^{(k)}_{\mu_k} }(u) \ge f(x_k) + \langle \nabla {\color{black} h^{(k)}_{\mu_k} }(0),u \rangle -  \frac{\sqrt{3}\tilde{L}\mu_k}{4n} \trace{P_k^\top \tilde{H}(x) P_k} \|u\| 
    + \frac{\tau}{2n}u^\top P_k^\top \tilde{H}( x_k )P_k u +  \frac{\mu_k^2\tau}{2n}\trace{P_k^\top \tilde{H}( x_k )P_k }.
  \]
  By substituting $-P_k^\top(x_k - x^*)$ for $u$, 
  \begin{eqnarray}
    \notag
      {\color{black} h^{(k)}_{\mu_k} }(-P_k^\top(x_k - x^*)) \ge f(x_k) - \langle \nabla {\color{black} h^{(k)}_{\mu_k} }(0),P_k^\top(x_k - x^*) \rangle -  \frac{\sqrt{3}\tilde{L}\mu_k}{4n} \trace{P_k^\top \tilde{H}(x) P_k} \|P_k^\top(x_k - x^*)\| \\
    \notag
      + \frac{\tau}{2n}(x_k - x^*)^\top P_kP_k^\top \tilde{H}( x_k )P_k P_k^\top (x_k-x^*) +  \frac{\mu_k^2\tau}{2n}\trace{P_k^\top \tilde{H}( x_k )P_k }
  \end{eqnarray}
  holds, and then, regarding the second term on the right-hand side of \eqref{eq:local first inequality}, we have
  \begin{eqnarray}
    \notag
    \langle\nabla {\color{black} h^{(k)}_{\mu_k} }(0),P_k^\top(x_k - x^*) \rangle
    \ge f(x_k) - {\color{black} h^{(k)}_{\mu_k} }(-P_k^\top(x_k - x^*))  -  \frac{\sqrt{3}\tilde{L}\mu_k}{4n} \trace{P_k^\top \tilde{H}(x) P_k} \|P_k^\top(x_k - x^*)\| \\
    \label{eq:evaluate local main}
  + \frac{\tau}{2n}(x_k - x^*)^\top P_kP_k^\top \tilde{H}( x_k )P_k P_k^\top (x_k-x^*) +  \frac{\mu_k^2\tau}{2n}\trace{P_k^\top \tilde{H}( x_k )P_k }. \hspace{30pt}
  \end{eqnarray}
  After taking the expectation with respect to $P_k$, we evaluate the right-hand side. As for ${\color{black} h^{(k)}_{\mu_k} }(-P_k^\top(x_k - x^*))$, we have
  \begin{eqnarray*}
    &\E_{P_k}[{\color{black} h^{(k)}_{\mu_k} }(-P_k^\top(x_k - x^*))] &
    \eqlabeled{\eqref{eq:smoothing subspace function for u}} \E_{P_k}\E_{u_k} \left[f(x_k - \frac{1}{\sqrt{n}}P_kP_k^\top(x_k - x^*) + \frac{\mu_k}{\sqrt{n}}P_k u_k)\right]\\
    &&\lelabeled{Lipschitz} \E_{P_k}\left[ f(x_k - \frac{1}{\sqrt{n}}P_kP_k^\top(x_k - x^*)) \right]+ \E_{P_k}\E_{u_k}\left[\frac{L\mu_k}{\sqrt{n}}\|P_k u_k\|\right]\\
    &&\lelabeled{Lipschitz} f(x^*) + L\E_{P_k}\left[\|(I_n - \frac{1}{\sqrt{n}}P_kP_k^\top)(x_k - x^*)\|\right] + \E_{P_k}\E_{u_k}\left[\frac{L\mu_k}{\sqrt{n}}\|P_k u_k\|\right].
  \end{eqnarray*}
  Applying Lemma~\ref{lemma:properties of random vectors}.\ref{lemma:norm gaussian}, we have $\E_{P_k}\E_{u_k}\left[\|P_k u_k\|\right] = \E_{z_k}\E_{u_k}\left[\|z_k\|\|u_k\|\right] \le \sqrt{nd}$ and
  \begin{eqnarray}
    \notag
    &\E_{P_k}\left[\sqrt{\|(I_n - \frac{1}{\sqrt{n}}P_kP_k^\top)(x_k - x^*)\|^2}\right]
    &\lelabeled{Jensen's ineq.} \sqrt{\E_{P_k}\left[\|(I_n - \frac{1}{\sqrt{n}}P_kP_k^\top)(x_k - x^*)\|^2\right]}\\
    \notag
    &&= \sqrt{\E_{P_k} \left[ \|x_k-x^*\|^2 - \frac{2}{\sqrt{n}}\|P_k^\top(x_k - x^*)\|^2 + \frac{1}{n} \|P_kP_k^\top(x_k - x^*)\|^2\right]}\\
    \notag
    && \eqlabeled{$\E_{P_k} [P_kP_k^\top] = dI_n$} \sqrt{ \|x_k-x^*\|^2 - \frac{2d}{\sqrt{n}}\|x_k - x^*\|^2 + \E_{P_k}\left[\frac{1}{n} \|P_kP_k^\top(x_k - x^*)\|^2\right]}\\
    \notag
    &&\lelabeled{\eqref{eq: PPT x}} \sqrt{1 - 2\frac{d}{\sqrt{n}} + 2\frac{(d+4)(n+4)}{n} }~\|x_k - x^*\|\\
    \notag
    && = \sqrt{2d + 9 + \frac{8d}{n} + \frac{32}{n} - \frac{2d}{\sqrt{n}}}~\|x_k - x^*\|\\
    \notag
    && {\color{black}= \sqrt{2d + 9 + \frac{6d}{n} + \frac{32}{n} - 2d\left( \frac{1}{\sqrt{n}} -\frac1n\right)}~\|x_k - x^*\|}\\
    \notag
    && {\color{black}\lelabeled{$(d \ge 0,\; n\ge 1)$} \sqrt{8d + 41}~\|x_k - x^*\|.}
  \end{eqnarray}
  Finally, we have
  \begin{equation}
    \label{eq:evaluate4}
    \E_{P_k}[{\color{black} h^{(k)}_{\mu_k} }(-P_k^\top(x_k - x^*))]
    \le f(x^*)  + L\mu_k\sqrt{d} + L{\color{black}\sqrt{8d + 41}}~\|x_k - x^*\|.
  \end{equation}
  
  For $\frac{\sqrt{3}\tilde{L}\mu_k}{4n} \trace{P_k^\top \tilde{H}(x) P_k} \|P_k^\top(x_k - x^*)\|$ in \eqref{eq:evaluate local main}, 
  from $\mu_k \le \frac{1}{\trace{P_k^\top \tilde{H}(x_k) P_k}}$ and $\E_{P_k}[\|P_k^\top (x_k - x^*)\|] \le \sqrt{d}\|x_k - x^*\|$, we have
  \begin{equation}  
    \label{eq:evaluate5}
    \E_{P_k}\left[\frac{\sqrt{3}\tilde{L}\mu_k}{4n} \trace{P_k^\top \tilde{H}(x) P_k} \|P_k^\top(x_k - x^*)\| \right]
    \le \frac{\sqrt{3d}\tilde{L}}{4n} \|x_k - x^*\|.
  \end{equation}
  
  Next, regarding $(x_k - x^*)^\top P_kP_k^\top \tilde{H}( x_k )P_k P_k^\top (x_k-x^*)$, we have
  \begin{eqnarray*}
    \E_{P_k} [(x_k - x^*)^\top P_kP_k^\top \tilde{H}( x_k )P_k P_k^\top (x_k-x^*)]
    \ge \E_{P_k} [\lambda_{\min}(P_k^\top \tilde{H}( x_k )P_k) \|P_k^\top (x_k-x^*)\|^2].
  \end{eqnarray*}
  For applying conditional expectation property, we define $\mathcal{A} := \{P | \lambda_{\min}(P^\top \tilde{H}( x_k )P) \ge \frac{(1-\varepsilon_0)^2 n}{2}\sigma^2 \bar{\lambda}\}$
  , $\mathcal{B} := \{P | \|P^\top (x_k - x^*)\|^2 \ge \frac{d}{2}\|x_k - x^*\|^2\}$ and $X := (x_k - x^*)^\top P_kP_k^\top \tilde{H}( x_k )P_k P_k^\top (x_k-x^*) \ge 0$.
  Then, we have
  \[
    \E_{P_k} [X] = \E_{P_k} [X|\mathcal{A}\cap \mathcal{B}]P(\mathcal{A} \cap \mathcal{B}) + \E_{P_k} [X|\overline{\mathcal{A} \cap \mathcal{B}}](1 - P(\mathcal{A} \cap \mathcal{B}) ) 
    \ge \E_{P_k} [X|\mathcal{A}\cap \mathcal{B}]P(\mathcal{A}\cap \mathcal{B}).
  \]
  Applying Lemma~\ref{lemma:properties of random matrix}.\ref{lemma:Johnson} and Proposition~\ref{prop:strongly convexify}, we have
  \[
    P(\mathcal{A} \cap \mathcal{B}) 
    \ge 1 - P(\mathcal{A}) - P(\mathcal{B})
    \ge 1 - 6\exp{(-d)} - 2 \exp{(-\frac{C_0d}{4})}.
  \]
  Let $C := 1 - 6\exp{(-d)} - 2 \exp{(-\frac{C_0d}{4})}$, we have
  \begin{equation}
    \label{eq:evaluate6}
    \E_{P_k}[X] \ge  \frac{(1-\varepsilon_0)^2C dn}{4}\sigma^2 \bar{\lambda} \|x_k - x^*\|^2.
  \end{equation}
  From \eqref{eq:evaluate local main}, \eqref{eq:evaluate4}, \eqref{eq:evaluate5}, and \eqref{eq:evaluate6}, 
  we obtain
  \begin{eqnarray}
    \notag
    \E_{P_k} [\langle \nabla {\color{black} h^{(k)}_{\mu_k} }(0),P_k^\top(x_k - x^*) \rangle]
    \ge f(x_k) - f(x^*)  - L\mu_k\sqrt{d} - L{\color{black}\sqrt{8d+41}}\|x_k - x^*\| \\
    \label{eq:EPk (h,P(x-x*))}
    - \frac{\sqrt{3d}\tilde{L}}{4n} \|x_k - x^*\| + \frac{(1-\varepsilon_0)^2 \tau Cd}{8}\sigma^2 \bar{\lambda} \|x_k - x^*\|^2. \hspace{30pt}
  \end{eqnarray}
  To satisfy the condition:
  \[
    - L{\color{black}\sqrt{8d+41}}\|x_k - x^*\| 
    - \frac{\sqrt{3d}\tilde{L}}{4n} \|x_k - x^*\| + \frac{(1-\varepsilon_0)^2\tau Cd}{8}\sigma^2 \bar{\lambda} \|x_k - x^*\|^2
    \ge 0,
  \]
  we need
  \begin{equation}
    {\label{eq:distance x_k - x^*}}
      \|x_k - x^*\| \ge C_1 \sqrt{{\color{black}\frac{8d+41}{d^2}}} + \frac{C_2}{n\sqrt{d}}.
    \end{equation}
  When $x_k$ satisfies \eqref{eq:distance x_k - x^*}, we obtain
  \begin{equation}
    \label{eq:new eval in local}
    \E_{P_k} [\langle \nabla {\color{black} h^{(k)}_{\mu_k} }(0),P_k^\top(x_k - x^*) \rangle]
    \ge f(x_k) - f(x^*)  - L\mu_k\sqrt{d}.
  \end{equation}
  Then, we have
  \begin{eqnarray*}
    &\E_{u_k}\E_{P_k} [r_{k+1}^2] 
    & \leeqref{eq:local first inequality} r_k^2 -  2\alpha_k \E_{P_k}[ \langle \nabla {\color{black} h^{(k)}_{\mu_k} }(0),P_k^\top(x_k - x^*) \rangle] + \frac{2L^2\alpha_k^2}{cn} (d+4)^2(n+4)\\
    && \leeqref{eq:new eval in local} r_k^2 -  2\alpha_k(f(x_k) - f(x^*)) + 2L\alpha_k\mu_k\sqrt{d} + \frac{2L^2\alpha_k^2}{cn} (d+4)^2(n+4).
  \end{eqnarray*}
  Taking the expectation with respect to $\mathcal{U}_k$, we have
  \begin{eqnarray}
    \label{eq:decrease expectation norm}
    \E_{\mathcal{U}_{k+1}}[r_{k+1}^2] 
    \le \E_{\mathcal{U}_{k}}[r_{k}^2]  -  2\alpha_k (\E_{\mathcal{U}_k}[f(x_k)] - f(x^*)) + 2L\alpha_k\mu_k\sqrt{d} + \frac{2L^2\alpha_k^2}{cn} (d+4)^2(n+4).
    \hspace{30pt}
  \end{eqnarray}
  If $\{x_k\}_{k=1,..,N}$ satisfy \eqref{eq:distance x_k - x^*},
  \[
    \sum_{k = 0}^{N-1} \alpha_k(\E_{\mathcal{U}_k}[f(x_k)] - f(x^*)) \le \frac{1}{2}r_0^2 +  L\sqrt{d}\sum_{k=0}^{N-1}  \alpha_k\mu_k  + \frac{L^2}{cn} (d+4)^2(n+4) \sum_{k=0}^{N-1} \alpha_k^2. 
  \]
  When $x_k$ does not satisfy \eqref{eq:distance x_k - x^*} for some $k$, i.e.,
  \[
    \|x_k - x^*\| < C_1 \sqrt{{\color{black}\frac{8d+41}{d^2}}} + \frac{C_2}{n\sqrt{d}},
  \]
  from Lipschitz continuity of $f$, we have
  \[
    f(x_k) - f(x^*) \le LC_1 \sqrt{{\color{black}\frac{8d+41}{d^2}}} + \frac{LC_2}{n\sqrt{d}}.
  \]
  \qed
\end{proof}
With fixed $\alpha_k = \alpha$ and $\mu_k = \mu$, from \eqref{eq:local result1} we obtain
\[
  \min_{0\le i \le N-1} \E_{\mathcal{U}_i}[f(x_i)] - f(x^*) \le \frac{r_0^2}{2N\alpha} +  L\sqrt{d}\mu  + \frac{L^2(n+4)(d+4)^2}{cn}  \alpha.
\]
From this relation, the iteration complexity 
for achieving the inequality: $\min_{0\le i \le N-1} \E_{\mathcal{U}_i}[f(x_i)] - f(x^*) <\varepsilon$ is 
\begin{equation}
  \label{eq:local iteration complexity(central)}
  N = \frac{8r_0^2 L^2(n+4)(d+4)^2}{cn\varepsilon^2} = O\left(\frac{d^2}{\varepsilon^2}\right)
\end{equation}
with
\begin{equation}
  \label{eq:step size and mu(central)}
  \alpha = \frac{\sqrt{cn}r_0}{L(d+4)\sqrt{2(n+4)N}}, \;\mu \le \frac{\varepsilon }{2L\sqrt{d}}.
\end{equation}
From \eqref{eq:local result2}, when $LC_1 \sqrt{{\color{black}\frac{8d+41}{d^2}}} \le \frac{\varepsilon}{2}$ and $\frac{LC_2}{n\sqrt{d}} \le \frac{\varepsilon}{2}$ hold, we obtain $f(x_k) - f(x^*) \le \varepsilon$.
Then, the reduced dimension $d$ must satisfy $d = \Omega(\varepsilon^{-2})$.
When comparing the global and local iteration complexity, the local behavior becomes better when the original dimension $n$ satisfies $n  = \Theta(\varepsilon^{-p})$ with $p>4$.
In this case, while global iteration complexity becomes $O(n\varepsilon^{-2}) = O(\varepsilon^{-p -2})$, the local iteration complexity achieves $O(d^2\varepsilon^{-2}) = O(\varepsilon^{-6})$, which is less than $O(\varepsilon^{-p -2})$, with reduced dimension $d = \Theta(\varepsilon^{-2})$.
\begin{remark}
  Indeed, our algorithm uses only $P_ku_k$, and does not use $P_k$ and $u_k$ separately.
  This is the most interesting part of this research using random projections.
  We consider that the advantage of $P_ku_k$ comes from the relation
  \[
    \E_{P_k}[\min_{u\in \mathbb{R}^d} {\color{black} h^{(k)}}(u)=f(x_k + \frac{1}{\sqrt{n}}P_k u)] \le \E_{v_k}[\min_{\alpha \in \mathbb{R}} f(x_k + \alpha v_k)],
  \] 
  where $v_k \in  \mathbb{R}^n$ is a random vector whose entries come from $\mathcal{N}(0,1)$.
  This relation is clear because all entries of $P_k$ and $v_k$ follow $\mathcal{N}(0,1)$ and the left-hand side problem is identical to the right-hand side when $d = 1$.
  Noticing that the function on the left-hand side is denoted by ${\color{black} h^{(k)}}(u)$ in \eqref{eq:subspace function},
  we can regard \eqref{eq:update in algorithm} in Algorithm~\ref{algorithm:subspace gradient free}
  as one iteration of Algorithm~\ref{algorithm:RGF} 
  (i.e., RGF~\cite{nesterov2017random}) applied to the problem on the left-hand side. 
\end{remark}

\section{Experiments}
\label{seciton:experiments}
\subsection{Softmax Regression} \label{sec:softmax_exp}
In this section, we evaluate the performance of the proposed method, Algorithm~\ref{algorithm:subspace gradient free}, and compare it with 
RGF~\cite{nesterov2017random} with central differences, which is described as Algorithm~\ref{algorithm:RGF}, by optimizing a softmax loss function with $L_1$ regularization:
\[
  \min_{w_1,...,w_c,b_1,...,b_c} -\frac{1}{m}\sum_{i=1}^m \log{\frac{\exp(w_{y_i}^\top x_i + b_{y_i})}{\sum_{k=1}^c \exp(w_{k}^\top x_i + b_{k})} } + \lambda \sum_{k=1}^c( \|w_k\|_1 + \|b_k\|_1),
\]
where $(x_i,y_i)$ denotes data and $y_i \in \{1,2,...,c\}$.
For the $L_1$ regularization, we set $\lambda = 10^{-6}$ and use reduced dimensional size $d \in \{10,50,100\}$.
For both methods, we set a smoothing parameter $\mu_k = 10^{-8}$ and use a fixed step size $\alpha_k = 10^{i}\;(i\in \mathbb{N})$. 

\begin{table}[tb]
  \caption{Details of the datasets for Softmax regression~\cite{chang2011libsvm}.}
  \centering
  \begin{tabular}{c|ccc}
    Name & feature & class ($c$) & training size ($m$)\\
    \hline
    SCOTUS & 126,405 & 13 & 5,000\\
    news20 & 62,061 & 20 & 15,935\\
    \hline
  \end{tabular}
\label{table:experiment1}
\end{table}
\begin{figure}[tb]
  \begin{minipage}{0.5\hsize}
    \centering
    \includegraphics[width=2.6 in]{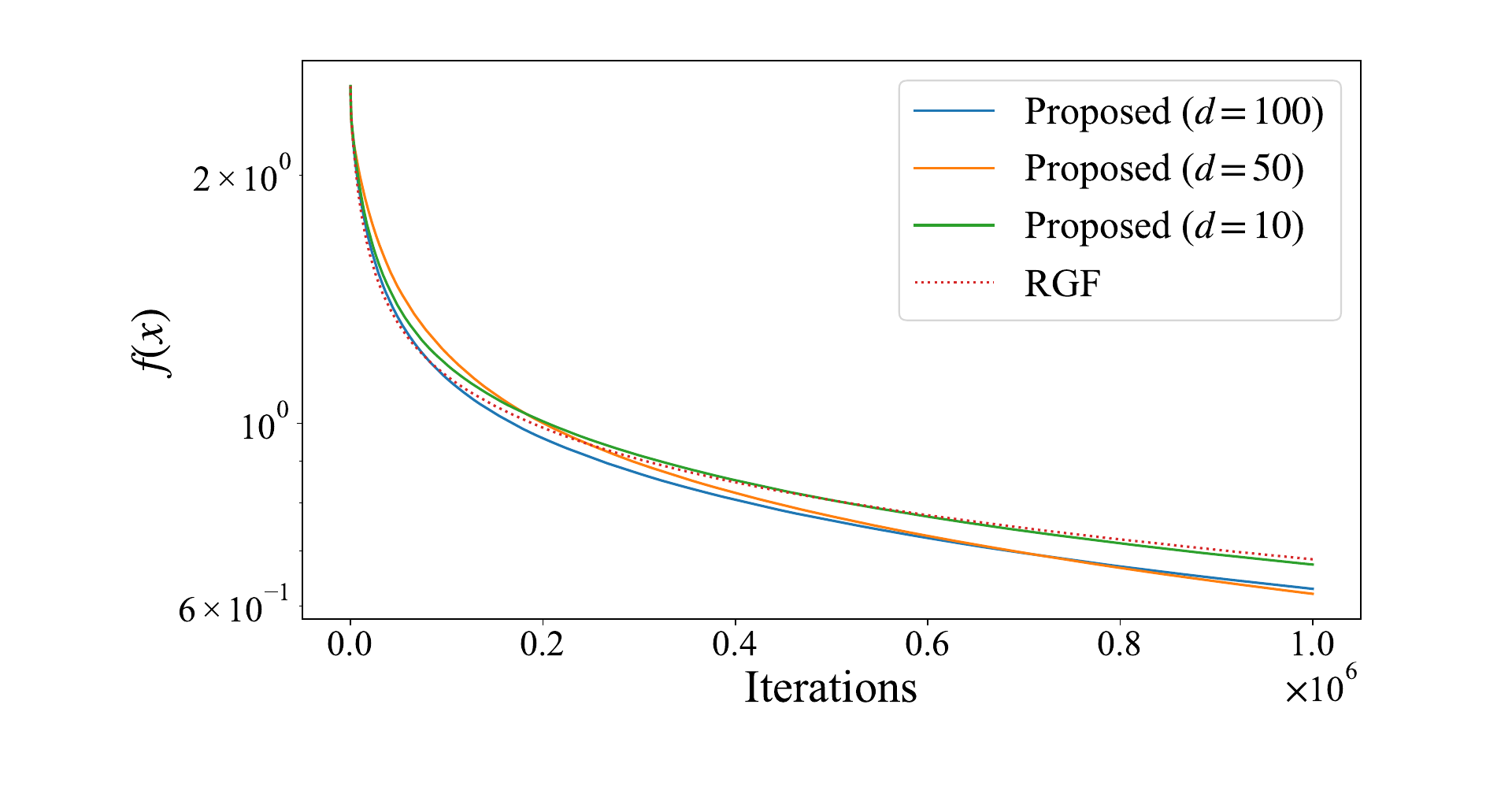}  
    \subcaption*{SCOTUS Dataset}
  \end{minipage}
  \begin{minipage}{0.5\hsize}
    \centering
    \includegraphics[width=2.6 in]{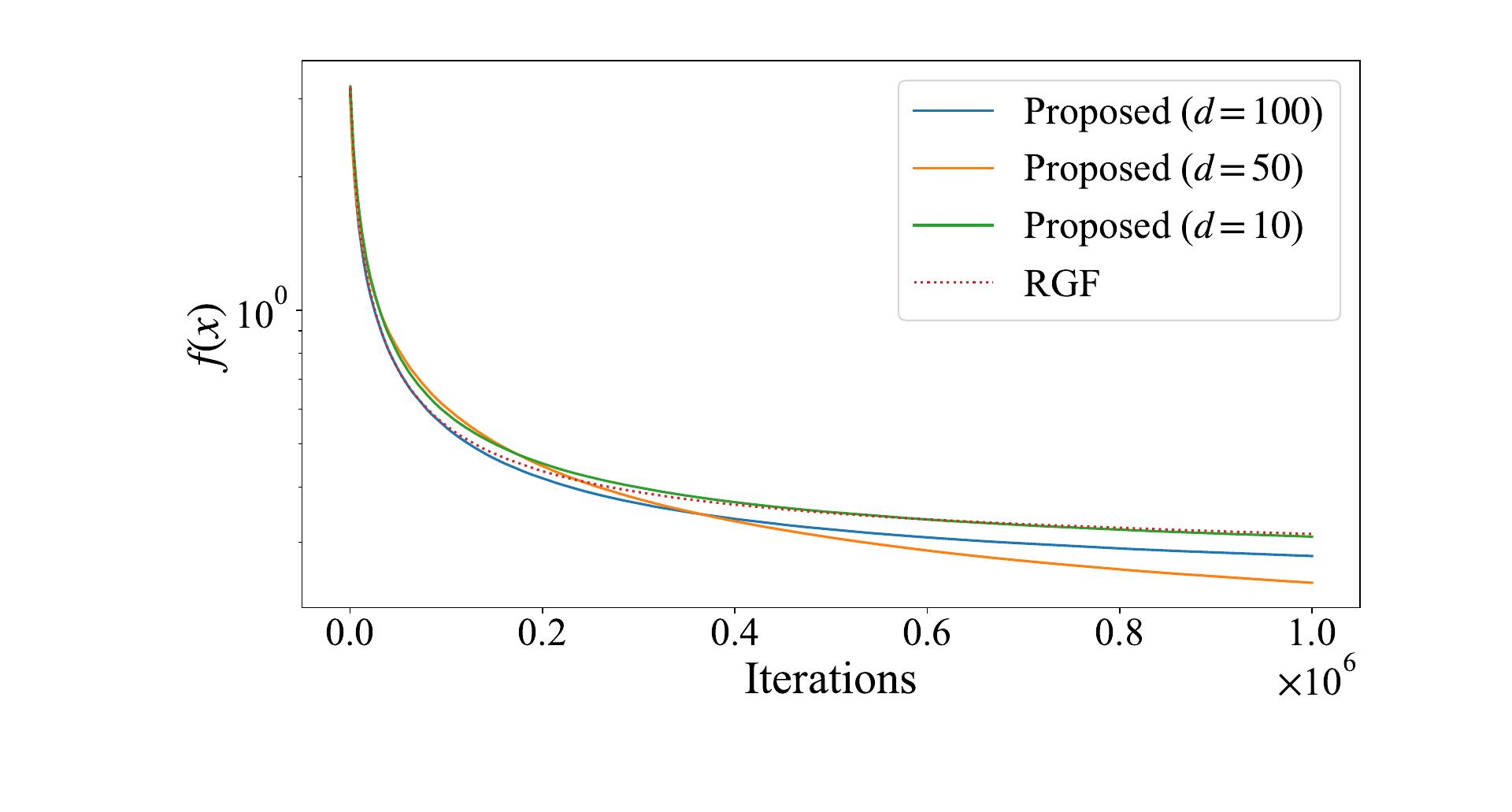}
    \subcaption*{news20 Dataset}
  \end{minipage}
  \caption{Softmax loss function with $L_1$ regularization.}
  \label{fig:experiment1}
\end{figure}

Figure~\ref{fig:experiment1} shows function values of RGF and the proposed methods per iteration for the datasets listed in Table~\ref{table:experiment1}.
From Figure~\ref{fig:experiment1}, we can find that our algorithm converges faster than RGF after a sufficient number of iterations, while RGF reduces the objective function value more rapidly in the early iterations.
{\color{black}
This behavior might be consistent with the theoretical guarantees of global and local worst-case complexities (Theorems~\ref{theorem:global convergence} and \ref{theorem:local convergence}, respectively),
considering that the global complexity $O(\frac{n}{\varepsilon^2})$ is the same to the one of RGF. 
  However,  it seems difficult to see 
  the difference between the coefficients of the local and global sublinear rates, i.e., that the local iteration complexity is $d^2/n$ times the global one.
  Perhaps the reason is that the rate improvement of local convergence is about worst-case complexity,
  and such worst-case may not always be achieved in practice.
}

  As for the result of function values in time, generating random matrices is time-consuming and there is no benefit using random projections in view of time for general functions.
  In this setting, our proposed methods spend more time for the same number of iterations.

\subsection{Adversarially Robust Logistic Regression}
We consider the next adversarially robust optimization, which is studied in~\cite{madry2017towards}:
  \begin{eqnarray}
&& \min_{w,b} \max_{\| \tilde{x}\| \le \delta} ~ \tilde{g}( \tilde{x};w,b)  := \notag \\ 
&&  -\frac{1}{m}\sum_{i=1}^m \log{\frac{1}{(1 + e^{-y_i (w^\top (x_i+\tilde{x}) + b)})}} + \lambda ( \|w\|_1 + \|b\|_1),   
 \label{problem:adversarially robust}
  \end{eqnarray}
  where $\lambda = 10^{-7}$.
By letting $\theta^\top=(w^\top,b)$ and $\eta^\top=(\tilde{x}^\top, 0)$, 
we can rewrite Problem~\eqref{problem:adversarially robust} as $\min_{\theta} f(\theta)$, where
\begin{eqnarray*}
  && f(\theta):=  \max_{\eta \in \mathcal{H}} g_\theta(\theta^\top \eta), ~~   \mathcal{H}:= \{\eta: \| \eta\| \le \delta\}, \\
  && g_\theta(\alpha) := -\frac{1}{m}\sum_{i=1}^m \log{\frac{1}{(1 + e^{-y_i (\alpha + w^\top x_i + b)})}} + \lambda ( \|w\|_1 + \|b\|_1). \\
\end{eqnarray*}
Note that the derivative of $f$ is difficult to compute due to the non-smoothness of $f$ in general.

In this formulation, we can take advantages of random projections in our proposed method.
When we evaluate the function value $f(\theta_k + \frac{1}{\sqrt{n}}P_k u)$, it is necessary to solve
\begin{equation}
  {\label{problem:max}}
  \max_{\eta\in \mathcal{H}} g_{\theta_k}(\theta_k^\top \eta + u^\top \frac{1}{\sqrt{n}}P_k^\top \eta).
\end{equation}
In this case, we solve the following approximated optimization problem:
\begin{equation}
  \label{problem:approximated problem}
  \max_{(\alpha,\beta) \in \mathcal{A}} g_{\theta_k}(\alpha + u^\top\beta),
\end{equation}
where $\mathcal{A} = \{(\alpha,\beta)| \frac{\alpha^2}{\|\theta_k\|^2} + \beta^2 \le \delta^2\}$.
We will explain that this approximated problem~\eqref{problem:approximated problem} is equivalent to the original problem~\eqref{problem:max} 
when some condition holds, and also show that the condition holds with high probability.
Now, we confirm that we can obtain $\eta$ such that $\|\eta\| \le \delta$ from the solution of the approximated problem~\eqref{problem:approximated problem}.
Let $(\alpha^*,\beta^*)$ denote optimal solutions of this approximated problem~\eqref{problem:approximated problem}. When  $d+1 < n$, we can confirm the existence of $\eta$ that satisfies $\alpha^*=\theta_k^\top \eta$ and $\beta^*=\frac{1}{\sqrt{n}}P_k^\top \eta$ by solving the linear equation;
\begin{equation}
  \label{eq:linear equation}
  z^*
  = A \eta,\; 
  A := \left(
    \begin{array}{c}
      \frac{\theta_k^\top}{\|\theta_k\|}\\
      \frac{1}{\sqrt{n}}P_k^\top
    \end{array}
  \right),\;
  z^* :=
  \left(
    \begin{array}{c}
      \frac{\alpha^*}{\|\theta_k\|}\\
      \beta^*
    \end{array}
  \right).
\end{equation}
From the linear dependence of row vectors in $A$, the minimum norm solution of \eqref{eq:linear equation} is $\eta = A^\top (AA^\top)^{-1}z^*$, and then $\|\eta\|^2 = (z^*)^\top (AA^\top)^{-1}z^* \le \lambda_{\max}((AA^\top)^{-1})\|z^*\|^2$ holds.
This inequality implies that when $\|z^*\|\le \sqrt{\lambda_{\min}(AA^\top)}\delta$ holds, $\|\eta\| \le \delta$ also holds.
From $(\alpha^*,\beta^*) \in \mathcal{A}$, $\|z^*\|^2 = \left(\frac{\alpha^*}{\|\theta_k\|}\right)^2 + \|\beta^*\|^2 \le \delta^2$ holds. 
Then, if $\lambda_{\min}(AA^\top) \ge 1$ holds, $\|z^*\| \le \delta$ implies $\|\eta\| \le \delta$.
To show this relation, we prove next Lemma~\ref{lemma:singular value of A}.
\begin{lemma}
  \label{lemma:singular value of A}
  Let $A := \left(
    \begin{array}{cc}
      \frac{x}{\|x\|},
      \frac{1}{\sqrt{n}}P
    \end{array}
  \right)^\top$, where $P\in \mathbb{R}^{n\times d}$ is a random matrix whose entries are sampled from $\mathcal{N}(0,1)$ and $x \in \mathbb{R}^n$.
  Then, $\lambda_{\min}(AA^\top) \ge  1 - 2(3+\varepsilon)\sqrt{\frac{d}{n}} + \frac{4d}{n}$ holds with probability at least $1 - 2\exp{(-C_0\varepsilon^2 d)}  - \exp{(-d/2)} $.
\end{lemma}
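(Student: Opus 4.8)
The plan is to use the variational characterization $\lambda_{\min}(AA^\top) = \sigma_{\min}(A)^2 = \min_{\|w\|=1}\|A^\top w\|^2$ over $w \in \mathbb{R}^{d+1}$, together with the two concentration results already available in Lemma~\ref{lemma:properties of random matrix}. Writing $w = (w_0,\tilde{w})$ with $w_0 \in \mathbb{R}$, $\tilde{w}\in\mathbb{R}^d$, and setting $\hat{x} := x/\|x\|$, we have $A^\top w = w_0\hat{x} + \frac{1}{\sqrt{n}}P\tilde{w}$, so that
\[
  \|A^\top w\|^2 = w_0^2 + \frac{2w_0}{\sqrt{n}}(P^\top\hat{x})^\top\tilde{w} + \frac{1}{n}\|P\tilde{w}\|^2 .
\]
The first term is exactly $w_0^2$ since $\hat{x}$ is a unit vector, so the work is to bound the last term from below uniformly in $\tilde{w}$ and to control the indefinite cross term.

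First I would invoke the two concentration estimates. Applying Lemma~\ref{lemma:properties of random matrix}.\ref{lemma:minimum singular value} with $t = \sqrt{d/n}$ gives $\sigma_{\min}(P/\sqrt{n}) \ge 1 - 2\sqrt{d/n}$, hence $\frac{1}{n}\|P\tilde{w}\|^2 \ge (1-2\sqrt{d/n})^2\|\tilde{w}\|^2$ for \emph{every} $\tilde{w}$, on an event of probability at least $1 - \exp(-d/2)$. Applying the Johnson--Lindenstrauss bound of Lemma~\ref{lemma:properties of random matrix}.\ref{lemma:Johnson} to the fixed unit vector $\hat{x}$ gives $\|P^\top\hat{x}\|^2 \le (1+\varepsilon)d$, i.e. $\|P^\top\hat{x}\| \le \sqrt{(1+\varepsilon)d}$, on an event of probability at least $1 - 2\exp(-C_0\varepsilon^2 d)$. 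On the intersection of these events, writing $a := |w_0|$ and $b := \|\tilde{w}\|$ with $a^2+b^2 = 1$ and applying Cauchy--Schwarz to the cross term, I obtain
\[
  \|A^\top w\|^2 \ge a^2 - 2ab\sqrt{(1+\varepsilon)d/n} + (1-2\sqrt{d/n})^2 b^2 .
\]
It then remains to minimize this quadratic form over the positive quadrant of the unit circle. Using $a^2 + (1-2\sqrt{d/n})^2 b^2 \ge (1-2\sqrt{d/n})^2$ and the loose bound $2ab\sqrt{(1+\varepsilon)d/n} \le 2\sqrt{(1+\varepsilon)d/n} \le 2(1+\varepsilon)\sqrt{d/n}$ (since $a,b\le 1$ and $\sqrt{1+\varepsilon}\le 1+\varepsilon$) yields
\[
  \|A^\top w\|^2 \ge (1-2\sqrt{d/n})^2 - 2(1+\varepsilon)\sqrt{d/n} = 1 - 2(3+\varepsilon)\sqrt{d/n} + \frac{4d}{n},
\]
and a union bound over the two events gives the stated failure probability $2\exp(-C_0\varepsilon^2 d) + \exp(-d/2)$.

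The main obstacle is the indefinite cross term $\frac{2w_0}{\sqrt{n}}(P^\top\hat{x})^\top\tilde{w}$: because it may be negative and couples $w_0$ with $\tilde{w}$, a bound on the diagonal blocks alone (equivalently, a Weyl-type argument separating $\mathrm{diag}(1,\frac{1}{n}P^\top P)$ from its off-diagonal perturbation) does not immediately suffice, and the minimizing $w$ is itself random through $P$. The key point that makes the argument go through is that the two concentration events can be combined for this random $w$: the smallest-singular-value bound holds \emph{uniformly} over all directions $\tilde{w}$, while the Johnson--Lindenstrauss bound is only needed for the single \emph{deterministic} direction $\hat{x}$. I would take care to state both events in the required one-sided forms before intersecting them, and to verify that the choice $t=\sqrt{d/n}$ matches the exponent $-d/2$ in the target probability.
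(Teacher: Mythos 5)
Your proposal is correct and follows essentially the same route as the paper: both use the variational characterization $\lambda_{\min}(AA^\top)=\min_{\|w\|=1}w^\top AA^\top w$, bound the cross term via Cauchy--Schwarz together with Lemma~\ref{lemma:properties of random matrix}.\ref{lemma:Johnson} applied to the single direction $x/\|x\|$, lower-bound the block-diagonal part by $\min\{1,\sigma_{\min}(P/\sqrt{n})^2\}$ using Lemma~\ref{lemma:properties of random matrix}.\ref{lemma:minimum singular value} with $t=\sqrt{d/n}$, and take a union bound. Your additional remarks on which event must hold uniformly versus only for the fixed vector $\hat{x}$ are accurate and consistent with the paper's argument.
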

\begin{proof}
  Let $w_1\in \mathbb{R}$ and $w_2 \in \mathbb{R}^d$. 
  From the property of the minimum eigenvalue, we have
  \begin{eqnarray*}
    &\lambda_{\min}(AA^\top)&
    = \min_{\|w\|=1} w^\top AA^\top w
    = \min_{w_1^2 + \|w_2\|^2=1} w_1^2 + \frac{2}{\sqrt{n}\|x\|} w_1 x^\top P w_2 + \frac{1}{n}w_2^\top P^\top P w_2\\
    &&\ge\min_{w_1^2 + \|w_2\|^2=1} w_1^2 - \frac{2}{\sqrt{n}\|x\|} |w_1| \|P^\top x\| \|w_2\| + \frac{1}{n}w_2^\top P^\top P w_2.
  \end{eqnarray*}
  Regarding the second term on the right-hand side, from Lemma~\ref{lemma:properties of random matrix}.\ref{lemma:Johnson} and $w_1^2 + \|w_2\|^2 =1$, we obtain
  \[
    \frac{2}{\sqrt{n}\|x\|} |w_1| \|P^\top x\| \|w_2\|
    \lelabeled{Lemma~\ref{lemma:properties of random matrix}.\ref{lemma:Johnson}} 2(1+\varepsilon)\sqrt{\frac{d}{n}} |w_1|\|w_2\| \le 2(1+\varepsilon)\sqrt{\frac{d}{n}}
  \]
  with probability at least $1 - 2\exp{(-C_0\varepsilon^2 d)}$. Then, we have
  \[
    \lambda_{\min}(AA^\top)
    \ge \min_{w_1^2 + \|w_2\|^2=1}(w_1^2 + \frac{1}{n}w_2^\top P^\top P w_2) - 2(1+\varepsilon)\sqrt{\frac{d}{n}}.
  \]
  The first term on right-hand side is equivalent to the minimum eigenvalue of 
  \[
    \left(
      \begin{array}{cc}
        1& 0\\
        0& \frac{1}{n}P^\top P
      \end{array}
    \right),
  \]
  and then, we have $\min_{w_1^2 + \|w_2\|^2=1}(w_1^2 + \frac{1}{n}w_2^\top P^\top P w_2) = \min\{1,\sigma_d(\frac{1}{\sqrt{n}}P)^2\}$. 
  Applying Lemma~\ref{lemma:properties of random matrix}.\ref{lemma:minimum singular value}, $\sigma_d(\frac{1}{\sqrt{n}}P)\ge 1 - 2\sqrt{\frac{d}{n}}$ holds with probability at least $1 - \exp{(-d/2)}$.
  Hence, we have
  \[
    \lambda_{\min}(AA^\top)
    \ge \left(1 - 2\sqrt{\frac{d}{n}}\right)^2 - 2(1+\varepsilon)\sqrt{\frac{d}{n}}
    = 1 - 2(3+\varepsilon)\sqrt{\frac{d}{n}} + \frac{4d}{n}
  \]
  with probability at least $1 - 2\exp{(-C_0\varepsilon^2 d)}  - \exp{(-d/2)} $.
  \qed
\end{proof}

When $n$ is large enough and $d = o(n)$, from Lemma~\ref{lemma:singular value of A}, $\lambda_{\min}(AA^\top) \gtrapprox 1$ holds with high probability.
Then, $\|\eta\|\le \delta$ holds with high probability.

Next, we confirm that the optimal solution to the original problem~\eqref{problem:max} can be obtained from \eqref{problem:approximated problem}.
Let $\eta^*$ denote the global solution. When $(\theta_k^\top \eta^*,\frac{1}{\sqrt{n}}P_k^\top \eta^*)$ is contained in $\mathcal{A}$, we can calculate the same maximal value as in the original problem~\eqref{problem:max} from \eqref{problem:approximated problem}.
We show that $(\theta_k^\top \eta^*,\frac{1}{\sqrt{n}}P_k^\top \eta^*)$ is contained in $\mathcal{A}$ with high probability. We have
\[
  \frac{(\theta_k^\top \eta^*)^2}{\|\theta_k\|^2} + \frac{1}{n} \|P_k^\top \eta^*\|^2
  \lelabeled{Lemma~\ref{lemma:properties of random matrix}.\ref{lemma:Johnson}}  \frac{(\theta_k^\top \eta^*)^2}{\|\theta_k\|^2} + \frac{d(1+\varepsilon)}{n}\|\eta^*\|^2
  \le \left(1 + \frac{d(1+\varepsilon)}{n}\right)\|\eta^*\|^2
\]
with probability at least $1 - 2\exp{(-C\varepsilon^2 d)}$.
Hence, with sufficiently large $n$ and $d = o(n)$, we have $\|A \eta^*\| \lessapprox  \|\eta^*\| \le \delta$ and $A\eta^*$ is contained in $\mathcal{A}$.
Note that the problem~\eqref{problem:max} is not convex optimization even if $\mathcal{H}$ is a convex set, because $-g$ is concave.

\begin{table}[tb]
  \caption{Details of the dataset for logistic regression~\cite{chang2011libsvm}.}
  \centering
  \begin{tabular}{c|ccc}
    Name & feature & class ($c$) & training size ($m$)\\
    \hline
    news20(binary) & 1,355,191 &2&19,996\\
    random & 1,000,000&2&100\\
    \hline
  \end{tabular}
\label{table:experiment2}
\end{table}

\begin{figure}[tb]
  \begin{minipage}{0.5\hsize}
    \centering
    \includegraphics[width=2.5 in]{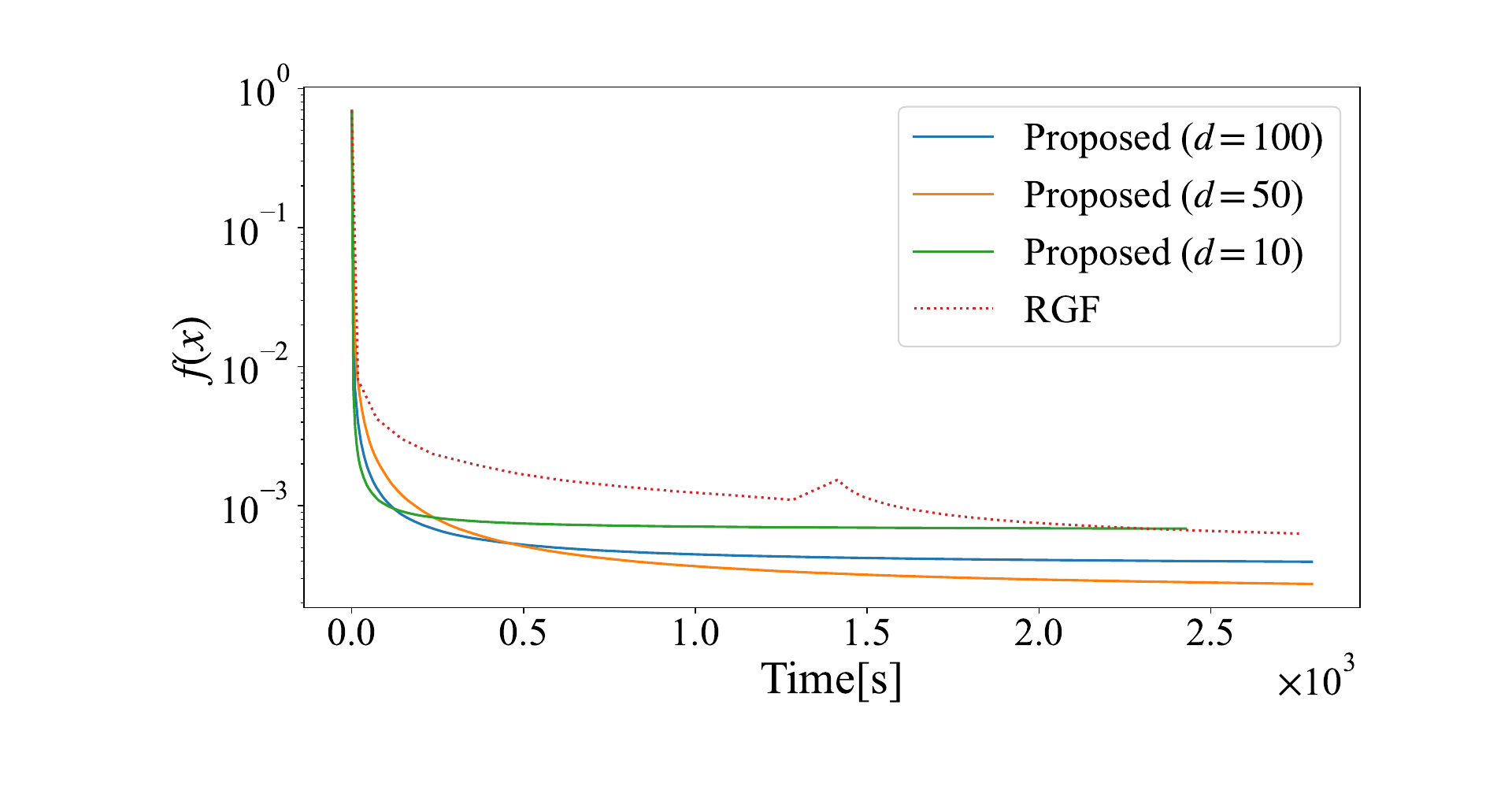}  
    \subcaption*{random Dataset ($\delta = 10^{-2}$)}  
  \end{minipage}
  \begin{minipage}{0.5\hsize}
    \centering
    \includegraphics[width=2.5 in]{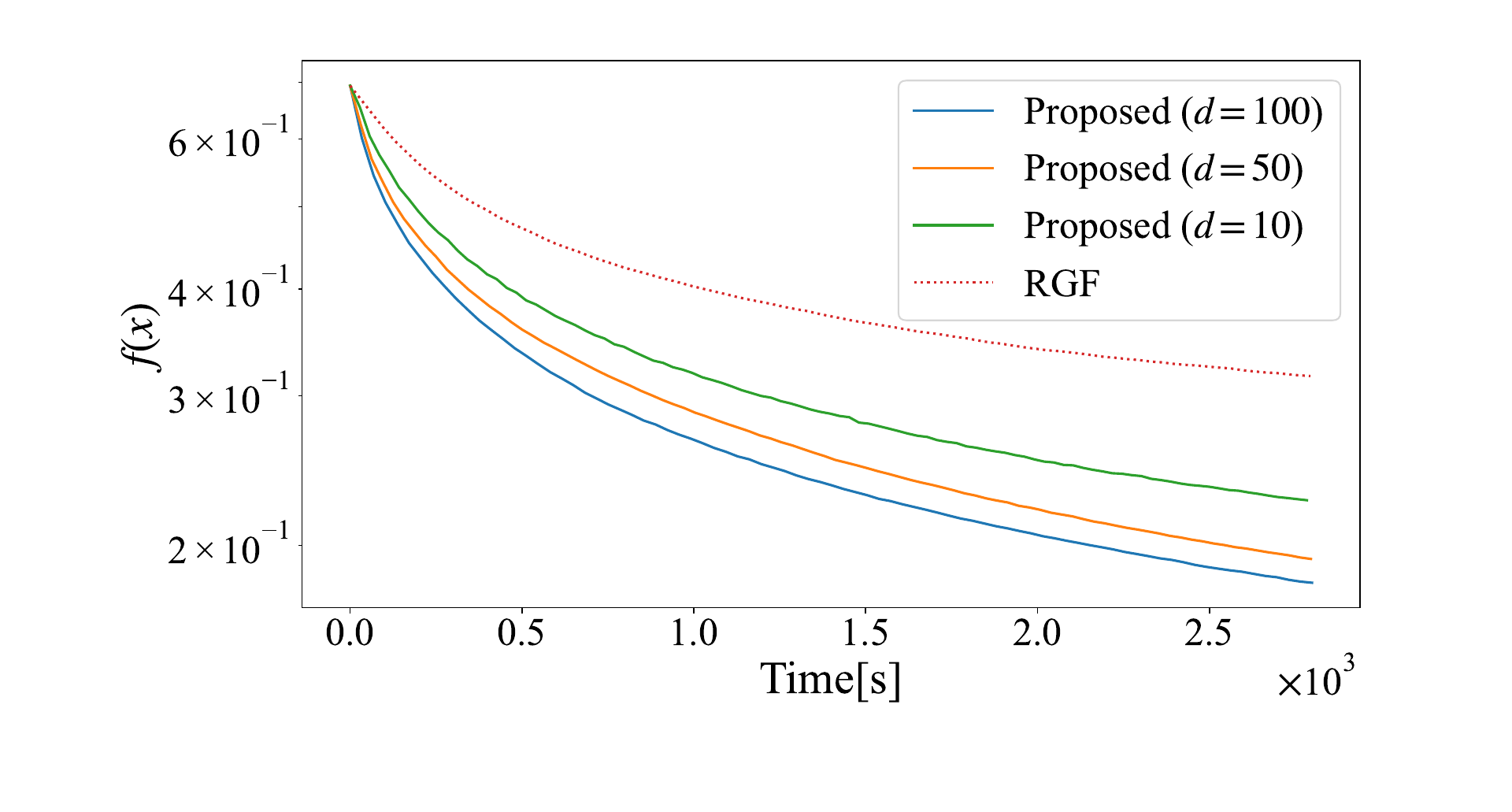}  
    \subcaption*{news20(binary) Dataset ($\delta = 10^{-3}$)}  
  \end{minipage}
  \caption{Adversarially robust logistic regression ($f(x)$ vs time).}
  \label{fig:experiment2}
\end{figure}

In numerical experiments, we evaluate $f(x)$ by solving the maximum optimization using the accelerated proximal gradient method until the norm of the generalized gradient is less than $10^{-7}$. 
In our proposed method, we evaluate $f(x + \frac{1}{\sqrt{n}} P^\top u)$ using the approximated random optimization problem~\eqref{problem:approximated problem}.
Furthermore, we increase the sampling size per iteration in both the proposed method and RGF, i.e., we update $x_k$ by
\begin{eqnarray*}
  x_{k+1} = x_k -\frac{\alpha_k}{\sqrt{l}}\sum_{i=1}^l\frac{{\color{black} h^{(k)}}(\mu_lu_k^{(l)}) - {\color{black} h^{(k)}}(-\mu_lu_k^{(l)})}{2\mu_k}P_k u_k^{(l)}, \\
  x_{k+1} = x_k -\frac{\alpha_k}{\sqrt{l}}\sum_{i=1}^l\frac{f(x_k + \mu_lv_k^{(l)}) - f(x_k-\mu_lv_k^{(l)})}{2\mu_k} v_k^{(l)},
\end{eqnarray*}
respectively.
We use a dataset from \cite{chang2011libsvm} and randomly generated one.
For the random dataset, we use a matrix $X$ and a vector $w$ whose entries are sampled from $\mathcal{N}(0,1)$.
The labels are generated as $y := \mathbf{1}_{x\ge 0}(Xw + \varepsilon)$, where $\varepsilon$ is a noise vector sampled from $\mathcal{N}(0,I_m/100)$.
In both methods, we set a smoothing parameter $\mu_k = 10^{-8}$ and use a fixed step size $\alpha_k = 10^{i}\;(i\in \mathbb{N})$. 

Figure~\ref{fig:experiment2} shows time versus the function values of RGF and the proposed methods for the datasets listed in Table~\ref{table:experiment2}.
From Figure~\ref{fig:experiment2} when evaluating $f(x)$ is time-consuming due to solve maximization problems, our proposed method converges faster than RGF.
This efficiency comes from the random projection technique, which leads to a reduction of function evaluation time.

\section{Conclusion}
\label{section:conclusion}
We proposed a new zeroth-order method combining random projections and smoothing method for non-smooth convex optimization problems. 
While our proposed method achieves $O(\frac{n}{\varepsilon^2})$ worst-case iteration complexity, which is equivalent to the standard result under convex and non-smooth setting,
 ours can converge with $O(\frac{d^2}{\varepsilon^2})$, which does not depend on the dimension $n$,
under some additional local properties of an objective. 
In numerical experiments, our method performed well when the function evaluation time can be reduced using random projection.
As discussed in Section~\ref{sec:softmax_exp},
since we have shown in this paper is the improvement of the ``worst-case'' oracle complexity, it is not always the case that the worst-case oracle complexity is achieved when the algorithm is actually run.
Indeed, many applications using zeroth-order methods have succeeded despite of large scale models and their oracle complexities depending on the dimension $n$~\cite{ye2018hessian,mania2018simple,pmlr-v80-choromanski18a,salimans2017evolution}. It may be interesting 
to investigate whether iteration complexities of zeroth-order methods are not affected by $n$ in practical use, or whether it can strongly depend on it in any problem setting  as a future work.
We also would like to investigate the convergence rate of our algorithm in a non-smooth and non-convex setting in the future.

\section*{Acknowledgement}
This work was supported by JSPS KAKENHI Grant Number 23H03351 and JST ERATO Grant Number JPMJER1903.

\section{Compliance with Ethical Standards}
This work was partially supported by JSPS KAKENHI (23H03351) and JST ERATO (JPMJER1903).
There is no conflict of interest in writing the paper.

\bibliographystyle{abbrvnat}
\bibliography{ref}

\begin{thebibliography}{36}
\providecommand{\natexlab}[1]{#1}
\providecommand{\url}[1]{\texttt{#1}}
\expandafter\ifx\csname urlstyle\endcsname\relax
  \providecommand{\doi}[1]{doi: #1}\else
  \providecommand{\doi}{doi: \begingroup \urlstyle{rm}\Url}\fi

\bibitem[Alexandrov(1939)]{alexandrov1939almost}
A.~D. Alexandrov.
\newblock Almost everywhere existence of the second differential of a convex
  function and some properties of convex surfaces connected with it.
\newblock \emph{Leningrad State Univ. Annals [Uchenye Zapiski] Math. Ser.},
  6:\penalty0 3, 1939.

\bibitem[Berglund et~al.(2022)Berglund, Khirirat, and Wang]{berglund2022zeroth}
E.~Berglund, S.~Khirirat, and X.~Wang.
\newblock Zeroth-order randomized subspace newton methods.
\newblock In \emph{ICASSP 2022-2022 IEEE International Conference on Acoustics,
  Speech and Signal Processing (ICASSP)}, pages 6002--6006. IEEE, 2022.

\bibitem[Bergstra et~al.(2011)Bergstra, Bardenet, Bengio, and
  K\'{e}gl]{bergstra2011algorithms}
J.~Bergstra, R.~Bardenet, Y.~Bengio, and B.~K\'{e}gl.
\newblock Algorithms for hyper-parameter optimization.
\newblock In J.~Shawe-Taylor, R.~Zemel, P.~Bartlett, F.~Pereira, and
  K.~Weinberger, editors, \emph{Advances in Neural Information Processing
  Systems}, volume~24. Curran Associates, Inc., 2011.

\bibitem[Beznosikov et~al.(2020)Beznosikov, Gorbunov, and
  Gasnikov]{beznosikov2020derivative}
A.~Beznosikov, E.~Gorbunov, and A.~Gasnikov.
\newblock Derivative-free method for composite optimization with applications
  to decentralized distributed optimization.
\newblock \emph{IFAC-PapersOnLine}, 53\penalty0 (2):\penalty0 4038--4043, 2020.

\bibitem[Bubeck et~al.(2017)Bubeck, Lee, and Eldan]{bubeck2017kernel}
S.~Bubeck, Y.~T. Lee, and R.~Eldan.
\newblock Kernel-based methods for bandit convex optimization.
\newblock In \emph{Proceedings of the 49th Annual ACM SIGACT Symposium on
  Theory of Computing}, STOC 2017, page 72–85, New York, 2017. Association
  for Computing Machinery.

\bibitem[Cartis and Roberts(2023)]{cartis2023scalable}
C.~Cartis and L.~Roberts.
\newblock Scalable subspace methods for derivative-free nonlinear least-squares
  optimization.
\newblock \emph{Mathematical Programming}, 199\penalty0 (1-2):\penalty0
  461--524, 2023.

\bibitem[Chang and Lin(2011)]{chang2011libsvm}
C.-C. Chang and C.-J. Lin.
\newblock Libsvm: a library for support vector machines.
\newblock \emph{ACM transactions on intelligent systems and technology (TIST)},
  2\penalty0 (3):\penalty0 1--27, 2011.

\bibitem[Choromanski et~al.(2018)Choromanski, Rowland, Sindhwani, Turner, and
  Weller]{pmlr-v80-choromanski18a}
K.~Choromanski, M.~Rowland, V.~Sindhwani, R.~Turner, and A.~Weller.
\newblock Structured evolution with compact architectures for scalable policy
  optimization.
\newblock In J.~Dy and A.~Krause, editors, \emph{Proceedings of the 35th
  International Conference on Machine Learning}, volume~80 of \emph{Proceedings
  of Machine Learning Research}, pages 970--978. PMLR, 10--15 Jul 2018.
\newblock URL \url{https://proceedings.mlr.press/v80/choromanski18a.html}.

\bibitem[Davidson and Szarek(2001)]{davidson2001local}
K.~R. Davidson and S.~J. Szarek.
\newblock Local operator theory, random matrices and banach spaces.
\newblock \emph{Handbook of the geometry of Banach spaces}, 1\penalty0
  (317-366):\penalty0 131, 2001.

\bibitem[Duchi et~al.(2012)Duchi, Bartlett, and
  Wainwright]{duchi2012randomized}
J.~C. Duchi, P.~L. Bartlett, and M.~J. Wainwright.
\newblock Randomized smoothing for stochastic optimization.
\newblock \emph{SIAM Journal on Optimization}, 22\penalty0 (2):\penalty0
  674--701, 2012.

\bibitem[Duchi et~al.(2015)Duchi, Jordan, Wainwright, and
  Wibisono]{duchi2015optimal}
J.~C. Duchi, M.~I. Jordan, M.~J. Wainwright, and A.~Wibisono.
\newblock Optimal rates for zero-order convex optimization: The power of two
  function evaluations.
\newblock \emph{IEEE Transactions on Information Theory}, 61\penalty0
  (5):\penalty0 2788--2806, 2015.

\bibitem[Fuji et~al.(2022)Fuji, Poirion, and Takeda]{fuji2022randomized}
T.~Fuji, P.-L. Poirion, and A.~Takeda.
\newblock Randomized subspace regularized newton method for unconstrained
  non-convex optimization.
\newblock \emph{arXiv preprint arXiv:2209.04170}, 2022.

\bibitem[Gasnikov et~al.(2022)Gasnikov, Novitskii, Novitskii, Abdukhakimov,
  Kamzolov, Beznosikov, Takac, Dvurechensky, and Gu]{gasnikov2022power}
A.~Gasnikov, A.~Novitskii, V.~Novitskii, F.~Abdukhakimov, D.~Kamzolov,
  A.~Beznosikov, M.~Takac, P.~Dvurechensky, and B.~Gu.
\newblock The power of first-order smooth optimization for black-box non-smooth
  problems.
\newblock In K.~Chaudhuri, S.~Jegelka, L.~Song, C.~Szepesvari, G.~Niu, and
  S.~Sabato, editors, \emph{Proceedings of the 39th International Conference on
  Machine Learning}, volume 162 of \emph{Proceedings of Machine Learning
  Research}, pages 7241--7265. PMLR, 17--23 Jul 2022.
\newblock URL \url{https://proceedings.mlr.press/v162/gasnikov22a.html}.

\bibitem[Gasnikov et~al.(2016)Gasnikov, Lagunovskaya, Usmanova, and
  Fedorenko]{gasnikov2016gradient}
A.~V. Gasnikov, A.~A. Lagunovskaya, I.~N. Usmanova, and F.~A. Fedorenko.
\newblock Gradient-free proximal methods with inexact oracle for convex
  stochastic nonsmooth optimization problems on the simplex.
\newblock \emph{Automation and Remote Control}, 77:\penalty0 2018--2034, 2016.

\bibitem[Golovin et~al.(2020)Golovin, Karro, Kochanski, Lee, Song, and
  Zhang]{golovin2019gradientless}
D.~Golovin, J.~Karro, G.~Kochanski, C.~Lee, X.~Song, and Q.~Zhang.
\newblock Gradientless descent: High-dimensional zeroth-order optimization.
\newblock In \emph{International Conference on Learning Representations}, 2020.

\bibitem[Gower et~al.(2019)Gower, Kovalev, Lieder, and Richtarik]{gower2019rsn}
R.~Gower, D.~Kovalev, F.~Lieder, and P.~Richtarik.
\newblock Rsn: Randomized subspace newton.
\newblock In H.~Wallach, H.~Larochelle, A.~Beygelzimer, F.~d'Alch\'{e} Buc,
  E.~Fox, and R.~Garnett, editors, \emph{Advances in Neural Information
  Processing Systems}, volume~32. Curran Associates, Inc., 2019.

\bibitem[Gratton et~al.(2015)Gratton, Royer, Vicente, and
  Zhang]{gratton2015direct}
S.~Gratton, C.~W. Royer, L.~N. Vicente, and Z.~Zhang.
\newblock Direct search based on probabilistic descent.
\newblock \emph{SIAM Journal on Optimization}, 25\penalty0 (3):\penalty0
  1515--1541, 2015.

\bibitem[Ilyas et~al.(2018)Ilyas, Engstrom, Athalye, and Lin]{ilyas2018black}
A.~Ilyas, L.~Engstrom, A.~Athalye, and J.~Lin.
\newblock Black-box adversarial attacks with limited queries and information.
\newblock In J.~Dy and A.~Krause, editors, \emph{Proceedings of the 35th
  International Conference on Machine Learning}, volume~80 of \emph{Proceedings
  of Machine Learning Research}, pages 2137--2146. PMLR, 10--15 Jul 2018.

\bibitem[Karimireddy et~al.(2018)Karimireddy, Stich, and
  Jaggi]{karimireddy2018global}
S.~P. Karimireddy, S.~U. Stich, and M.~Jaggi.
\newblock Global linear convergence of newton's method without strong-convexity
  or lipschitz gradients.
\newblock \emph{arXiv preprint arXiv:1806.00413}, 2018.

\bibitem[Kolda et~al.(2003)Kolda, Lewis, and Torczon]{kolda2003optimization}
T.~G. Kolda, R.~M. Lewis, and V.~Torczon.
\newblock Optimization by direct search: New perspectives on some classical and
  modern methods.
\newblock \emph{SIAM review}, 45\penalty0 (3):\penalty0 385--482, 2003.

\bibitem[Kozak et~al.(2021)Kozak, Becker, Doostan, and
  Tenorio]{kozak2021stochastic}
D.~Kozak, S.~Becker, A.~Doostan, and L.~Tenorio.
\newblock A stochastic subspace approach to gradient-free optimization in high
  dimensions.
\newblock \emph{Computational Optimization and Applications}, 79\penalty0
  (2):\penalty0 339--368, 2021.

\bibitem[Kozak et~al.(2023)Kozak, Molinari, Rosasco, Tenorio, and
  Villa]{kozak2023zeroth}
D.~Kozak, C.~Molinari, L.~Rosasco, L.~Tenorio, and S.~Villa.
\newblock Zeroth-order optimization with orthogonal random directions.
\newblock \emph{Mathematical Programming}, 199\penalty0 (1-2):\penalty0
  1179--1219, 2023.

\bibitem[Larson et~al.(2019)Larson, Menickelly, and Wild]{larson2019derivative}
J.~Larson, M.~Menickelly, and S.~M. Wild.
\newblock Derivative-free optimization methods.
\newblock \emph{Acta Numerica}, 28:\penalty0 287--404, 2019.

\bibitem[Madry et~al.(2018)Madry, Makelov, Schmidt, Tsipras, and
  Vladu]{madry2017towards}
A.~Madry, A.~Makelov, L.~Schmidt, D.~Tsipras, and A.~Vladu.
\newblock Towards deep learning models resistant to adversarial attacks.
\newblock In \emph{International Conference on Learning Representations}, 2018.
\newblock URL \url{https://openreview.net/forum?id=rJzIBfZAb}.

\bibitem[Magnus(1978)]{magnus1978moments}
J.~R. Magnus.
\newblock \emph{The moments of products of quadratic forms in normal
  variables}.
\newblock Univ., Instituut voor Actuariaat en Econometrie, 1978.

\bibitem[Mania et~al.(2018)Mania, Guy, and Recht]{mania2018simple}
H.~Mania, A.~Guy, and B.~Recht.
\newblock Simple random search provides a competitive approach to reinforcement
  learning.
\newblock \emph{arXiv preprint arXiv:1803.07055}, 2018.

\bibitem[Nesterov and Spokoiny(2017)]{nesterov2017random}
Y.~Nesterov and V.~Spokoiny.
\newblock Random gradient-free minimization of convex functions.
\newblock \emph{Foundations of Computational Mathematics}, 17:\penalty0
  527--566, 2017.

\bibitem[Niculescu and Persson(2006)]{niculescu2006convex}
C.~Niculescu and L.-E. Persson.
\newblock \emph{Convex functions and their applications}, volume~23.
\newblock Springer, New York, 2006.

\bibitem[Qian et~al.(2016)Qian, Hu, and Yu]{qian2016derivative}
H.~Qian, Y.-Q. Hu, and Y.~Yu.
\newblock Derivative-free optimization of high-dimensional non-convex functions
  by sequential random embeddings.
\newblock In \emph{Proceedings of the Twenty-Fifth International Joint
  Conference on Artificial Intelligence}, pages 1946--1952. AAAI Press, 2016.

\bibitem[Rando et~al.(2023)Rando, Molinari, Rosasco, and
  Villa]{rando2023optimal}
M.~Rando, C.~Molinari, L.~Rosasco, and S.~Villa.
\newblock An optimal structured zeroth-order algorithm for non-smooth
  optimization.
\newblock In \emph{Thirty-seventh Conference on Neural Information Processing
  Systems}, 2023.
\newblock URL \url{https://openreview.net/forum?id=SfdkS6tt81}.

\bibitem[Roberts and Royer(2023)]{roberts2023direct}
L.~Roberts and C.~W. Royer.
\newblock Direct search based on probabilistic descent in reduced spaces.
\newblock \emph{SIAM Journal on Optimization}, 33\penalty0 (4):\penalty0
  3057--3082, 2023.

\bibitem[Salimans et~al.(2017)Salimans, Ho, Chen, Sidor, and
  Sutskever]{salimans2017evolution}
T.~Salimans, J.~Ho, X.~Chen, S.~Sidor, and I.~Sutskever.
\newblock Evolution strategies as a scalable alternative to reinforcement
  learning.
\newblock \emph{arXiv preprint arXiv:1703.03864}, 2017.

\bibitem[Shamir(2017)]{shamir2017optimal}
O.~Shamir.
\newblock An optimal algorithm for bandit and zero-order convex optimization
  with two-point feedback.
\newblock \emph{The Journal of Machine Learning Research}, 18\penalty0
  (1):\penalty0 1703--1713, 2017.

\bibitem[Vershynin(2018)]{vershynin2018high}
R.~Vershynin.
\newblock \emph{High-dimensional probability: An introduction with applications
  in data science}, volume~47.
\newblock Cambridge university press, Cambridge, 2018.

\bibitem[Ye et~al.(2018)Ye, Huang, Fang, Li, and Zhang]{ye2018hessian}
H.~Ye, Z.~Huang, C.~Fang, C.~J. Li, and T.~Zhang.
\newblock Hessian-aware zeroth-order optimization for black-box adversarial
  attack.
\newblock \emph{arXiv preprint arXiv:1812.11377}, 2018.

\bibitem[Yue et~al.(2023)Yue, Yang, Fang, and Lin]{yue2023zeroth}
P.~Yue, L.~Yang, C.~Fang, and Z.~Lin.
\newblock Zeroth-order optimization with weak dimension dependency.
\newblock In G.~Neu and L.~Rosasco, editors, \emph{Proceedings of Thirty Sixth
  Conference on Learning Theory}, volume 195 of \emph{Proceedings of Machine
  Learning Research}, pages 4429--4472. PMLR, 12--15 Jul 2023.

\end{thebibliography}
\end{document}